\title
{           {\protect\hfill \normalfont \tiny
            \\ \vspace{10pt}}
Open descent and strong approximation
 %\\
%{\normalfont\tiny Version pr\'eliminaire}
}
\author{Dasheng Wei
}
\date{\today}
\keywords{torus, universal torsor, strong approximation, Brauer--Manin obstruction}
\subjclass[2010]{Primary: 11G35, 14G05}
\DeclareTextFontCommand{\textcyr}{\fontencoding{OT2}
    \fontfamily{wncyr}\fontseries{m}\fontshape{n}\selectfont}
\theoremstyle{plain}
\newtheorem{theorem}{Theorem}
\newtheorem{lemma}[theorem]{Lemma}
\newtheorem{corollary}[theorem]{Corollary}
\newtheorem{conditional-result}[theorem]{Conditional Result}
\newtheorem{theorem?}{Theorem(?)} [section]
\newtheorem{proposition?}[theorem]{Proposition(?)}
\newtheorem{lemma?}[theorem]{Lemma(?)}
\newtheorem{corollary?}[theorem]{Corollary(?)}
\newtheorem*{theorem*}{Theorem}
\newtheorem*{proposition*}{Proposition}
\newtheorem*{lemma*}{Lemma}
\newtheorem*{corollary*}{Corollary}
\newtheorem*{question*}{Question}
\newtheorem*{conjecture*}{Conjecture}
\newtheorem*{claim*}{Claim}
\newtheorem*{introtheorem*}{Theorem}
\newtheorem*{introproposition*}{Proposition}
\newtheorem*{introlemma*}{Lemma}
\newtheorem*{introcorollary*}{Corollary}
\theoremstyle{definition}
\newtheorem{example}[theorem]{Example}
\newtheorem*{definition*}{Definition}
\newtheorem*{example*}{Example}
\theoremstyle{remark}
\newtheorem*{remark*}{Remark}
\numberwithin{equation}{section}
\numberwithin{theorem}{section}
\DeclareSymbolFont{rsfs}{U}{rsfs}{m}{n}
\DeclareSymbolFontAlphabet{\mathcal}{rsfs}
\DeclareTextFontCommand{\textcyr}{\fontencoding{OT2}
    \fontfamily{wncyr}\fontseries{m}\fontshape{n}\selectfont}
\newcommand{\Sh}{\textcyr{Sh}}
\newcommand{\ZZ}{{\mathbb{Z}}}
\newcommand{\QQ}{{\mathbb{Q}}}
\newcommand{\Gal}{{\rm Gal}}
\newcommand{\Pic}{{\rm Pic}}
\newcommand{\Br}{{\rm Br}}
\renewcommand{\ker}{{\rm ker}}
\newcommand{\Hom}{{\rm Hom}}
\newcommand{\kbar}{{\overline{k}}}
\newcommand{\Ybar}{{\overline Y}}
\newcommand{\Deltabar}{{\overline \Delta}}
\newcommand{\That}{{\widehat{T}}}
\newcommand{\Mhat}{{\widehat{M}}}
\newcommand{\Xbar}{{\overline{X}}}
\def\G{{\mathbb{G}}}
\def\Z{{\ZZ}}
\def\Q{{\QQ}}
\def\Div{{\textup{Div}}}
\newcommand{\Tbar}{{\overline T}}
\def\AA{{\mathcal{A}}}
\def\T{{\mathcal{T}}}
\def\A{\mathbf{A}}
\begin{document}

\begin{abstract} We give a new version of the open descent theory of Harari and Skorobogatov. As an application of the new version, we  prove that  some algebraic varieties satisfy strong approximation.
%Harari and Skorobogatov extended the descent theory of Colliot-Th\'el\`ene and Sansuc to arbitrary
%smooth algebraic varieties by removing the condition that every
%invertible regular function is constant.

\end{abstract}

\maketitle

\section{Introduction}

Let $X$ be a smooth and geometrically integral variety
over a number field $k$.
The descent theory of Colliot-Th\'el\`ene and
Sansuc (\cite {CTS87})
describes arithmetic properties of $X$ in terms of
$X$-torsors under $k$-groups of multiplicative type.
It interprets
the {\it Brauer--Manin obstruction} to the existence of
a rational point (or to weak approximation) on $X$
in terms of the obstructions defined by torsors.

Let $\kbar$ be an algebraic closure of $k$.
The first applications of the descent theory was stated in
\cite {CTS87} for geometrically rational proper varieties; in this case it is enough
to consider torsors under tori. It was pointed out in \cite{Sko99}
that the theory works more generally under the assumption $\kbar[X]^\times=\kbar^\times$. This assumption is satisfied
when $X$ is proper, but it often fails for
many non-proper varieties;
it also fails for many homogeneous spaces of algebraic groups.

Harari and Skorobogatov (\cite{HS13}) extended the descent theory of Colliot-Th\'el\`ene and
Sansuc to the general case of a
smooth and geometrically integral variety (without the assumption $\kbar[X]^\times=\kbar^\times$). Their main results are almost same with Colliot-Th\'el\`ene and
Sansuc's.  For a geometrically integral variety $X$, to prove that strong approximation with Brauer--Manin obstruction holds on $X$, by Harari and Skorobogatov's open descent theory (\cite{HS13}), it needs to prove that  strong approximation holds on its torsors. However, if $X$ has non-constant invertible regular functions, then its torsors also have  non-constant invertible regular functions, thus its torsors will not satisfy strong approximation. Because of this reason, it is hard to find an application of the open descent theory.
In this paper, We give a new version of the open descent theory, its proof essentially depends on \cite[Theorem 2]{Har08}, a similar result is also gotten by Yang Cao.
By the new version of the open descent theory, to prove that strong approximation with Brauer--Manin obstruction holds on $X$, it only  need to prove that it holds on its torsors.
%In many case, it  becomes possible.
%In the general case of an arbitrary smooth and geometrically
%integral variety
%Colliot-Th\'el\`ene and Xu have recently introduced a
%Brauer--Manin obstruction to the existence of integral points
%\cite[Sect. 1]{ctfei}.

In section 1, we mainly prove the following theorem,  some applications are also given.
\begin{theorem} Let $X$ be a smooth and geometrically integral variety over $k$ (do not require $\kbar[X]^\times=\kbar^\times$).  Let $S$ be a torus with a morphism $\chi: \widehat {S}\rightarrow KD'(X)$. Let $\mathcal T$ be a torus of $X$ with the extended type $\chi$. Then we have
$$X(\A_k)^{\Br_1(X)}=\bigcup_{f: \mathcal T\xrightarrow{\chi} X}f(\mathcal T(\A_k)^{\Br_1(\mathcal T)}).$$
\end{theorem}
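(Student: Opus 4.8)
The plan is to prove the two inclusions separately. The inclusion $\supseteq$ is the easy functorial direction, while $\subseteq$ carries the full weight of the descent argument and is where \cite[Theorem 2]{Har08} enters.

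For $\supseteq$, fix a torsor $f:\mathcal T\to X$ of extended type $\chi$ and a point $Q\in\mathcal T(\A_k)^{\Br_1(\mathcal T)}$. For any $\alpha\in\Br_1(X)$, functoriality of the Brauer--Manin pairing gives $\langle f(Q),\alpha\rangle=\langle Q,f^*\alpha\rangle$, and since $f^*\alpha\in\Br_1(\mathcal T)$ while $Q$ is orthogonal to $\Br_1(\mathcal T)$, the right-hand side vanishes. Hence $f(Q)\in X(\A_k)^{\Br_1(X)}$; taking the union over all such $f$ yields this inclusion. This step is routine.

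For $\subseteq$, take $(P_v)\in X(\A_k)^{\Br_1(X)}$. First I would record the local fibers: for each place $v$, the fiber $f^{-1}(P_v)$ is an $S$-torsor over $k_v$ and so defines a class $[f^{-1}(P_v)]\in H^1(k_v,S)$, and these classes form an element of the restricted product $\prod_v H^1(k_v,S)$ (for almost all $v$ the untwisted torsor already admits an integral lift, so the class is trivial there). The point $P_v$ lifts to the twisted torsor $\mathcal T^\sigma$ precisely when the local component of $\sigma\in H^1(k,S)$ matches $[f^{-1}(P_v)]$; thus producing the desired $f:\mathcal T\xrightarrow{\chi}X$ together with an adelic lift amounts to showing that $\bigl([f^{-1}(P_v)]\bigr)_v$ lies in the image of the diagonal map from $H^1(k,S)$. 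The mechanism to prove this is a comparison of pairings: using the defining property of the extended type $\chi:\widehat S\to KD'(X)$, together with the fundamental exact sequence furnished by the Leray spectral sequence of $f$ relating $\Br_1(\mathcal T)$, $\Br_1(X)$ and the characters of $S$, one computes the pairing of $\bigl([f^{-1}(P_v)]\bigr)_v$ against any character in $\widehat S$ as the Brauer--Manin pairing of $(P_v)$ against the corresponding image of $\chi$ in $\Br_1(X)$. The hypothesis $(P_v)\perp\Br_1(X)$ therefore forces $\bigl([f^{-1}(P_v)]\bigr)_v$ to be orthogonal to the global characters.

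At this point I would invoke \cite[Theorem 2]{Har08}, which supplies the Poitou--Tate type duality for $S$ (equivalently for the complex attached to $KD'(X)$ through $\chi$) identifying the image of $H^1(k,S)$ in the restricted product $\prod_v H^1(k_v,S)$ with exactly this orthogonal complement. Hence the local classes descend to a single global $\sigma\in H^1(k,S)$, giving the torsor $\mathcal T^\sigma$ and an adelic lift $(Q_v)$ of $(P_v)$. A final compatibility check shows $(Q_v)\in\mathcal T^\sigma(\A_k)^{\Br_1(\mathcal T^\sigma)}$: the part of $\Br_1(\mathcal T^\sigma)$ pulled back from $X$ is killed by $(P_v)\perp\Br_1(X)$, and the complementary part, generated over the characters of $S$, is controlled by the same type condition and duality. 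I expect the main obstacle to be precisely this last coherence step in the open (non-proper) setting, where $\overline{k}[X]^\times\neq\overline{k}^\times$: one must verify that the duality of \cite[Theorem 2]{Har08} applies to the complex $KD'(X)$ and matches the \emph{algebraic} Brauer group $\Br_1$ rather than the full Brauer group, so that orthogonality to $\Br_1(X)$ alone suffices to patch the twists and establish orthogonality to $\Br_1(\mathcal T^\sigma)$. The extended type and the complex $KD'(X)$ are introduced exactly to bookkeep the unit contributions that cause this difficulty.
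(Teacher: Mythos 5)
There is a genuine gap, and it sits exactly where you wave your hands: the ``final compatibility check.'' Your argument up to that point (patching the local classes $[f^{-1}(P_v)]\in H^1(k_v,S)$ into a global twist via Poitou--Tate) reproduces the Harari--Skorobogatov open descent of \cite{HS13}, and what it delivers is an adelic lift $(Q_v)$ orthogonal to the \emph{image of} $\Br_1(X)$ in $\Br_1(\mathcal T^\sigma)$ --- nothing more. The content of the theorem is precisely the passage from that to orthogonality against all of $\Br_1(\mathcal T^\sigma)$, and this does not follow from ``the same type condition and duality'': by Lemma \ref{lem:2} the quotient of $\Br_1(\T)$ by the image of $\Br_1(X)$ injects via $\varphi$ into $\Br_a(S)$, and for an arbitrary lift $(Q_v)$ the induced character $\mathrm{Im}(\varphi)\to\Q/\Z$, $\varphi(\mathcal B)\mapsto\sum_v\mathcal B(Q_v)$, is generally nonzero. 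No choice of twist alone fixes this, and no purely formal argument does either.

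The paper's proof supplies two mechanisms you are missing. First, it adjusts the twist by a class $\alpha\in\Sh^1(S)$ chosen by Poitou--Tate duality $\Sh^1(S)\cong\Sh^2(\widehat S)^*$ so that the obstruction character vanishes on $\mathrm{Im}(\varphi)\cap\Sh^2(\widehat S)$; this part of $\Br_a(S)$ is locally constant on $\T(\A_k)$ and therefore cannot be removed by moving the adelic point, only by retwisting (the verification that the character transforms correctly under twisting occupies Lemmas \ref{lem:3}--\ref{lem:6} and Sansuc's Lemma 8.4). Second --- and this is the step where \cite[Theorem 2]{Har08} actually enters, not in the patching step where you place it --- it uses the exactness of $S(\A_k)\to\Br_a(S)^D\to\Sh^2(\widehat S)^D\to 0$ to find $(s_v)\in S(\A_k)$ whose pairing against $\Br_a(S)$ realizes the negative of the remaining obstruction character, and then \emph{translates} the lift to $(Q_v\cdot s_v)$ using the $S$-action on the torsor. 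Lemma \ref{lem:5}(i) guarantees that translation by $(s_v)$ shifts $\sum_v\mathcal B(Q_v)$ by exactly $\sum_v\varphi(\mathcal B)(s_v)$, killing the obstruction. Your proposal never uses the torus action to move the point at all, so it cannot close this gap; as written it proves only the \cite{HS13} statement with $f^*\Br_1(X)$ in place of $\Br_1(\mathcal T)$.
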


In section 2, we give some more applications of the new version of the open descent theory. In particular, we prove the following result, which generalizes  the results for toric varieties (\cite{CX13}) and for more general groupic varieties (\cite{CX15}).
\begin{theorem}  Let $k$ be a number field, $G$ a connected linear algebraic group and $H$ its connected subgroup. Let $X$ be a $G$-variety.
Suppose $X$ contains an open subset which is isomorphic to $G/H$ and the natural $G$-action on $G/H$ is compatible with the $G$-action on $X$.

Let $\Sigma$ be a finite set of places of $k$ containing all archimedean places. Assume $\prod_{v\in \Sigma}G'(k_v)$ is not compact for any non-trivial simple factor $G'$ of $G^{sc}$, and one of the following conditions holds:

1) $H$ is solvable;

2) $\Sigma$ contains at least one nonarchimedean place.\\
Then $X$ satisfies strong approximation with algebraic Brauer-Mannin obstruction off $\Sigma$.
\end{theorem}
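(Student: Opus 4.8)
The strategy is to reduce strong approximation on $X$ to strong approximation on a well-chosen torsor and there to exploit the homogeneous-space structure. Concretely, I would apply Theorem~1 to a torus $S$ and an extended type $\chi:\widehat S\to KD'(X)$ chosen so that the resulting $X$-torsor $\mathcal T$ of type $\chi$ has trivial algebraic Brauer group, $\Bra(\mathcal T)=0$. Granting this, the algebraic Brauer--Manin condition on $\mathcal T$ is vacuous and $\mathcal T(\A_k)^{\Br_1(\mathcal T)}=\mathcal T(\A_k)$, so once strong approximation off $\Sigma$ is established on $\mathcal T$ one argues as follows: given $(x_v)\in X(\A_k)^{\Br_1(X)}$, Theorem~1 produces $f:\mathcal T\xrightarrow{\chi}X$ and $(t_v)\in\mathcal T(\A_k)$ with $f((t_v))=(x_v)$; approximating $(t_v)$ at the relevant places by a rational point $t\in\mathcal T(k)$ which is integral off $\Sigma$, and applying $f$, yields $f(t)\in X(k)$ approximating $(x_v)$ off $\Sigma$. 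Thus everything reduces to the two steps below.

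\emph{Structure of the torsor.} Fix the open immersion $G/H\hookrightarrow X$. After replacing $G$ by a $z$-extension $G_1$ (so that $\widehat{G_1}$ is a permutation module, $\Pic(\overline{G_1})=0$, and $G_1^{sc}=G^{sc}$) and $H$ by its preimage $\widetilde H\subset G_1$, I would choose $S$ and $\chi$ so that $\mathcal T$ restricts over the open orbit to the homogeneous space $G_1/H_1$, where $H_1=\ker(\widetilde H\to\widetilde H^{\tor})$ is the kernel of the maximal toric quotient of $\widetilde H$. By construction $H_1$ is connected with trivial character group $\widehat{H_1}=0$; since $G_1/H_1\to G_1/\widetilde H=G/H$ is the expected $S$-torsor, this is precisely the condition forcing $\Bra(\mathcal T)=0$ (an algebraic class on $\mathcal T$ restricts injectively to one on the open orbit, where it vanishes). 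The point to verify is that this group-theoretic torsor really matches the type $\chi$ coming from $KD'(X)$ and extends to the equivariant open immersion $G_1/H_1\hookrightarrow\mathcal T$.

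\emph{Strong approximation on the torsor.} Since $G_1/H_1$ is open and dense in $\mathcal T$ and $\mathcal T$ carries a $G_1$-action, it suffices to prove strong approximation off $\Sigma$ for the homogeneous space $G_1/H_1$ with connected stabiliser satisfying $\widehat{H_1}=0$, and then to propagate it to $\mathcal T$ by using the $G_1$-action to move adelic points into the open orbit. Because the $z$-extension does not alter the simply connected cover, the non-compactness hypothesis on $\prod_{v\in\Sigma}G'(k_v)$ holds for the simple factors of $G_1^{sc}=G^{sc}$, putting us in the situation covered by the known strong approximation theory for homogeneous spaces (reduction to the semisimple simply connected case, which is also the mechanism behind Theorem~1 via \cite[Theorem~2]{Har08}). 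The hypotheses 1) and 2) enter exactly here, to guarantee that the Brauer--Manin obstruction on $\mathcal T$ is purely algebraic: when $H$ (hence $H_1$) is solvable the transcendental part is absent, while a nonarchimedean place in $\Sigma$ provides the room to absorb it, so that the vanishing $\Bra(\mathcal T)=0$ really yields unobstructed strong approximation.

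The principal obstacle is the structural step: one must realise the descent torsor $\mathcal T$, defined abstractly through $KD'(X)$, as an equivariant embedding of a genuine homogeneous space $G_1/H_1$ with $\widehat{H_1}=0$, i.e. show that the relevant class in $KD'(X)$ is of group-theoretic origin and that the identification is compatible with the $G$-action on $X\setminus(G/H)$. The second delicate point is pinning down the precise role of conditions 1) and 2) in discarding the transcendental Brauer classes on $\mathcal T$; this is the archimedean subtlety distinguishing strong approximation with algebraic obstruction from strong approximation with the full Brauer--Manin obstruction.
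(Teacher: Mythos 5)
Your central reduction --- choosing the extended type $\chi$ so that the descent torsor satisfies $\Br_a(\mathcal T)=0$, whence the Brauer--Manin condition on $\mathcal T$ is vacuous --- cannot be carried out in general, and this is precisely the difficulty the paper is built around. Any $X$-torsor $\mathcal T$ under a torus $S$ restricts over the open orbit to a torsor of $G/H$, which (after twisting so as to have a point) is a homogeneous space $\tilde G/\tilde H$ with $1\to S\to\tilde H\to H\to 1$; in particular $\tilde H$ has \emph{more} characters than $H$, not fewer, so your candidate stabiliser $H_1=\ker(\tilde H\to\tilde H^{\mathrm{tor}})$ is not what descent produces, and there is no reason a group-theoretic torsor of $G/H$ with prescribed $H_1$ extends to a torsor of all of $X$ of some extended type. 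More fundamentally, whenever $X$ has non-constant invertible regular functions so does every such $\mathcal T$, and then $\Br_a(\mathcal T)$ contains the (generally nonzero) image of $H^2(k,\kbar[\mathcal T]^\times/\kbar^\times)$: already for $X=G=\Gm$, $H=1$, every torsor of $\Gm$ under a torus possessing a rational point is itself a torus, and no nontrivial torus has $\Br_a=0$. So the step ``approximate $(t_v)$ by $t\in\mathcal T(k)$ without any Brauer--Manin condition'' is unavailable; the introduction of the paper singles out exactly this obstruction to applying open descent naively.

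The paper's actual route accepts that $\Br_a(\mathcal T)\neq 0$ and isolates where it lives: taking $\chi$ to be the type of a universal torsor of a smooth compactification $X^c$, one gets $\Pic(\overline{\mathcal T})=0$ and $\Br_1(\mathcal T)\cong\Br_1(T_1)$, where $T_1$ is the torus dual to $\kbar[\mathcal T]^\times/\kbar^\times$. One then fibres an open $\tilde G$-stable $U\subset\mathcal T$ with $\mathrm{codim}(\mathcal T\setminus U,\mathcal T)\geq 2$ over a toric variety $V$ assembled from the boundary divisors of $\overline{\mathcal T}\setminus\overline{\tilde G/\tilde H}$; the base $V$ is handled by the toric case (Theorem 2.1), and the fibres are homogeneous spaces $\tilde G_1/\tilde H$ of the kernel $\tilde G_1=\ker(\tilde G\to T_0)$ which \emph{do} have trivial algebraic Brauer group (Lemma 2.3) and satisfy strong approximation by Borovoi--Demarche. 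Your hypotheses 1) and 2) enter exactly there --- as the respective hypotheses of \cite[Theorems 0.1 and 0.2]{BD13} applied to these fibres (condition 1) forces $\Pic(\overline{\tilde H})=0$ and hence $\Br_a(\tilde G_1/\tilde H,\tilde G_1)=0$; condition 2) supplies the nonarchimedean place that Theorem 0.2 requires) --- not as a device for discarding transcendental classes on $\mathcal T$ itself, which never intervene since only $\Br_1$ is used throughout. Without the fibration over $V$ and the toric input, the argument does not close.
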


\section{The proof of Theorem 0.1}
In this section, we mainly prove Theorem 0.1. First, we give some lemmas.
\begin{lemma}\label{lem:1} Let $C$ be a geometrically rational variety over $k$ and $K=\kbar(C)$. For any smooth geometrically integral variety $Y$, let $Y_K=Y\times_k K$.  Then $\Pic(\Ybar)\cong \Pic(Y_K)$.
\end{lemma}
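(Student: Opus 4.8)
The plan is to reduce the statement to a computation of divisor class groups on $\Ybar\times_\kbar\mathbb{A}^n_\kbar$ and its generic fibre over $\mathbb{A}^n_\kbar$. First I would exploit the hypothesis that $C$ is geometrically rational: this means $C_\kbar$ is birational to $\mathbb{P}^n_\kbar$ with $n=\dim C$, so $K=\kbar(C)$ is isomorphic to the purely transcendental extension $\kbar(t_1,\dots,t_n)$, i.e. the function field of $\mathbb{A}^n_\kbar$. Writing $\Ybar=Y\times_k\kbar$, base change in stages gives $Y_K=Y\times_k K=\Ybar\times_\kbar K$, and since $K$ is the function field of $\mathbb{A}^n_\kbar$ this identifies $Y_K$ with the generic fibre of the projection $\pi\colon \Ybar\times_\kbar\mathbb{A}^n_\kbar\to\mathbb{A}^n_\kbar$. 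Thus it suffices to prove that the base-extension map $\Pic(\Ybar)\to\Pic(\Ybar_K)$ is an isomorphism, where $\Ybar_K$ denotes this generic fibre.

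I would split this into two isomorphisms. The first is homotopy invariance of the Picard group: since $Y$ is smooth and geometrically integral, $\Ybar$ is a smooth integral $\kbar$-variety, hence regular, and for regular Noetherian schemes pullback along the projection gives $\Pic(\Ybar)\isoto\Pic(\Ybar\times_\kbar\mathbb{A}^n_\kbar)$ (iterate the standard $\mathbb{A}^1$-case). The second is a localization computation $\Pic(\Ybar\times_\kbar\mathbb{A}^n_\kbar)\cong\Pic(\Ybar_K)$. Here I use that on smooth varieties $\Pic$ coincides with the divisor class group $\operatorname{Cl}$. Restriction to the generic fibre is surjective because every prime divisor of $\Ybar_K$ is the restriction of its closure in $\Ybar\times\mathbb{A}^n$; and an elementary chase, using that $\Ybar_K$ and $\Ybar\times\mathbb{A}^n$ share the same function field, shows that the kernel is generated by the classes of the vertical prime divisors, which are exactly the $\Ybar\times D$ for $D$ a prime divisor of $\mathbb{A}^n$ (such a product is irreducible since $\Ybar$ and $D$ are integral over the algebraically closed field $\kbar$).

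It then remains to observe that each vertical class vanishes. Because $\Pic(\mathbb{A}^n_\kbar)=\operatorname{Cl}(\mathbb{A}^n_\kbar)=0$ (the coordinate ring is a UFD), for each prime divisor $D$ we may write $[D]=\operatorname{div}(f)$ with $f\in\kbar(t_1,\dots,t_n)^\times$; pulling back along the flat projection $\pi$ gives $[\Ybar\times D]=\pi^\ast[D]=\operatorname{div}(\pi^\ast f)=0$ in $\Pic(\Ybar\times\mathbb{A}^n)$. Hence the kernel is trivial, the second map is an isomorphism, and composing it with homotopy invariance yields $\Pic(\Ybar)\cong\Pic(Y_K)$.

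I expect the main obstacle to be the kernel identification in the localization step: the generic fibre is not literally an open subscheme of $\Ybar\times\mathbb{A}^n$ but a filtered intersection of the opens $\Ybar\times D(f)$, so the clean localization sequence must be justified either by the direct function-field argument above or by the fact that $\Pic$ commutes with the cofiltered limit $\Ybar_K=\varprojlim_f\,\Ybar\times D(f)$, whose transition maps $\Ybar\times D(fg)\to\Ybar\times D(f)$ are affine (so that $\Pic(\Ybar_K)=\varinjlim_f\Pic(\Ybar\times D(f))$). Everything else — geometric rationality making $K$ purely transcendental, homotopy invariance of $\Pic$ for regular schemes, and the vanishing $\Pic(\mathbb{A}^n_\kbar)=0$ — is standard.
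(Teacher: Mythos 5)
Your proof is correct, but it takes a different route from the paper's. The paper's proof is a one‑liner in the style of Colliot-Th\'el\`ene--Sansuc: it writes $K$ as the filtered union of its smooth $\kbar$-subalgebras $A$ of finite type, notes that each $\operatorname{Spec} A$ is a rational $\kbar$-variety (being birational to $C_{\kbar}$), invokes the product formula $\Pic(\Ybar\times_{\kbar}A)\cong\Pic(\Ybar)\oplus\Pic(A)$ for a product with a smooth rational variety over an algebraically closed field, and passes to the colimit, where the $\Pic(A)$ summand dies. You instead use geometric rationality once at the outset to replace $K$ by $\kbar(t_1,\dots,t_n)$, realize $Y_K$ as the generic fibre of $\Ybar\times_{\kbar}\mathbb{A}^n_{\kbar}\to\mathbb{A}^n_{\kbar}$, and compute with divisor class groups: homotopy invariance gives $\Pic(\Ybar)\cong\Pic(\Ybar\times\mathbb{A}^n_{\kbar})$, and the localization step shows restriction to the generic fibre is an isomorphism because the only elements of the kernel come from vertical prime divisors $\Ybar\times D$, each principal since $\operatorname{Cl}(\mathbb{A}^n_{\kbar})=0$. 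Both arguments rest on the same skeleton --- $\Pic$ of a cofiltered limit of qcqs schemes with affine transition maps is the colimit of the $\Pic$'s --- but yours is more self-contained: it essentially reproves the product formula in the only case needed (localizations of a polynomial ring) using standard facts about $\operatorname{Cl}$, at the price of the vertical-versus-horizontal bookkeeping, which you handle correctly (irreducibility of $\Ybar\times D$ over the algebraically closed field $\kbar$, and the fact that elements of $K^{\times}$ are units along horizontal divisors). What the paper's route buys is brevity and the absence of any choice of birational model; what yours buys is independence from the cited splitting of $\Pic$ of a product.
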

\begin{proof} Let $K$ be a union of smooth $\kbar$-algebra of finite type over $\kbar$. For any such algebra $A$, $A $ is rational over $\kbar$, then we have $$\Pic(\Ybar \times_\kbar A)\cong \Pic(\Ybar)\oplus \Pic(A).$$
If one passes over to the limit over all such $A's$, one gets
$$\Pic(\Ybar)\cong \Pic(Y_K).\qedhere$$
\end{proof}

Let $S$ be a torus. Suppose $Y$ is a $S$-torsor of $X$. Let $m: S\times Y \rightarrow Y$ be the $S$-action of $Y$. Define (see \cite[(6.4.1)]{San81}) the morphism
$$\varphi: \Br_a(Y)\xrightarrow{m^*}\Br_a(S \times Y) \cong \Br_a(S)\times \Br_a(Y)\xrightarrow{\pi_1} \Br_a(S),$$
where $\pi_1$ is the first projection.

\begin{lemma}\label{lem:2} Let $Y$ be a torsor of $X$ under a torus $S$. Then we have the exact sequence
$$\Br_1(X)\rightarrow \Br_1(Y) \xrightarrow{\varphi} \Br_a(S).$$
\end{lemma}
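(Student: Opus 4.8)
The plan is to prove the two inclusions $\im(f^*)\subseteq\ker\varphi$ and $\ker\varphi\subseteq\im(f^*)$ separately: the first by a direct computation with the product decomposition, the second by reformulating $\ker\varphi$ as a descent datum and then descending along the torsor $f\colon Y\to X$. Throughout write $q\colon S\times Y\to Y$ for the projection, so that in $\Br_a(S\times Y)\cong\Br_a(S)\times\Br_a(Y)$ the second summand is $q^*\Br_a(Y)$, and let $e\colon\mathrm{Spec}\,k\to S$ be the identity section.

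For $\im(f^*)\subseteq\ker\varphi$: since the torsor projection is constant on $S$-orbits, $f\circ m=f\circ q$, hence $m^*f^*=q^*f^*$. Thus for $\beta\in\Br_1(X)$ the class $m^*f^*\beta=q^*f^*\beta$ lies in the summand $q^*\Br_a(Y)$, so its $\Br_a(S)$-component $\varphi(f^*\beta)$ vanishes. This gives $\varphi\circ f^*=0$.

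For the reverse inclusion, fix $\alpha\in\Br_1(Y)$ with $\varphi(\alpha)=0$ and reinterpret this condition. Pulling back along $e\times\id_Y\colon Y\to S\times Y$ kills the summand $\Br_a(S)$ and restricts to the identity on $q^*\Br_a(Y)$, because $q\circ(e\times\id_Y)=\id_Y$ while the composite $S\times Y\to S$ with $e\times\id_Y$ factors through $\mathrm{Spec}\,k$. Since $m\circ(e\times\id_Y)=\id_Y$ as well, the $\Br_a(Y)$-component of $m^*\alpha$ equals the image $\overline{\alpha}$ of $\alpha$, which is also the $\Br_a(Y)$-component of $q^*\alpha$; as the $\Br_a(S)$-component of $q^*\alpha$ is zero, we get that $\varphi(\alpha)=0$ is equivalent to $m^*\alpha=q^*\alpha$ in $\Br_a(S\times Y)$. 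Under the canonical isomorphism $S\times Y\cong Y\times_X Y$, $(s,y)\mapsto(s\cdot y,y)$, the maps $m$ and $q$ become the two projections $p_1,p_2\colon Y\times_X Y\to Y$, so $p_1^*\alpha=p_2^*\alpha$ in $\Br_a(Y\times_X Y)$, i.e. $p_1^*\alpha-p_2^*\alpha$ is the image of some $c\in\Br(k)$. Pulling back along the diagonal section (corresponding to $e\times\id_Y$) gives $\alpha-\alpha=0$ on one side and the same constant on the other, so $c$ already dies in $\Br(Y)$; hence $p_1^*\alpha=p_2^*\alpha$ holds in $\Br(Y\times_X Y)$ with no constant ambiguity.

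It remains to descend: from $p_1^*\alpha=p_2^*\alpha$ I want $\beta\in\Br(X)$ with $f^*\beta=\alpha$. This is the effectivity of descent for $\alpha$ along the $S$-torsor $f$. The decisive input is that $f$ is a torsor under a torus, so its geometric fibres are copies of $\Gm^n$, have trivial Picard group, and $R^1f_*\Gm=0$; this forces the equalizer of $p_1^*,p_2^*$ on $\Br(Y)$ to coincide with $f^*\Br(X)$. Concretely this is the product decomposition recorded before the lemma together with the computation of the Brauer group of a torus, as in \cite[(6.4)]{San81}, and it produces $\beta\in\Br(X)$ with $f^*\beta=\alpha$. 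Finally $\beta\in\Br_1(X)$: the geometric torsor $\overline{Y}\to\overline{X}$ is a torsor under the split torus $\overline{S}$, hence locally trivial, so $\overline{f}^*\colon\Br(\overline{X})\to\Br(\overline{Y})$ is injective, and $\overline{f}^*(\overline{\beta})=\overline{\alpha}=0$ forces $\overline{\beta}=0$.

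The genuinely delicate step is the descent in the last paragraph: passing from the equalizer condition to an honest class on $X$ is not formal, since a priori the equalizer of $p_1^*,p_2^*$ could exceed $f^*\Br(X)$ by contributions measured by the Čech cohomology of the cover with values in the relative Picard sheaf. It is exactly here that the hypothesis that $S$ is a torus is indispensable, through $\Pic(\overline{S})=0$; this collapses those contributions and is also what makes the receptacle of $\varphi$ equal to $\Br_a(S)\cong H^2(k,\widehat{S})$. Keeping track of the constant subgroup $\Br(k)$ throughout, so that the sequence reads $\Br_1\to\Br_1\to\Br_a$, is the only remaining bookkeeping.
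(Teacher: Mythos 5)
Your first three paragraphs are sound: the computation $m^*f^*=q^*f^*$ giving $\im(f^*)\subseteq\ker\varphi$, the identity $m^*\alpha = p_S^*\varphi(\alpha)+q^*\alpha$ obtained by pulling back along $e\times\id_Y$, and the resulting reformulation that $\varphi(\alpha)=0$ if and only if $p_1^*\alpha=p_2^*\alpha$ in $\Br(Y\times_X Y)$ (with the $\Br(k)$-ambiguity correctly killed by the diagonal section) are all correct, as is the final remark that $\beta$ is automatically algebraic because $\overline f^*$ is injective on $\Br(\Xbar)$. But the last step --- from $p_1^*\alpha=p_2^*\alpha$ to the existence of $\beta\in\Br(X)$ with $f^*\beta=\alpha$ --- is asserted, not proved, and it is precisely the content of the lemma. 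Effectivity of descent for Brauer classes along an fppf cover is \emph{not} a formal consequence of $R^1f_*\Gm=0$ or of $\Pic(\overline S)=0$: in the \v{C}ech-to-derived-functor spectral sequence for the cover $f\colon Y\to X$, the obstructions to a class in the equalizer $E_2^{0,2}$ surviving to $H^2(X,\Gm)$ lie in subquotients of $\check H^2(Y/X,\Pic)$, i.e.\ in the cohomology of the complex $\Pic(Y)\to\Pic(S\times Y)\to\Pic(S\times S\times Y)\to\cdots$, and in $\check H^3(Y/X,\Gm)$. These terms involve the \emph{arithmetic} Picard groups of the fibre products (for a nonsplit torus $\Pic(S)\cong H^1(k,\widehat S)$ is typically nonzero) and higher $\Gm$-cohomology, neither of which is collapsed by the geometric vanishing $\Pic(\overline S)=0$; your sentence ``this collapses those contributions'' is exactly the missing argument. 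Note also that \cite[(6.4)]{San81} is only where $\varphi$ is defined; it contains no descent-effectivity statement, and your claim that the equalizer on all of $\Br(Y)$ equals $f^*\Br(X)$ is even stronger than what the lemma asserts (which concerns only $\Br_1$).

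The paper itself does not attempt \v{C}ech descent: its proof is a citation of \cite[Corollary 2.10]{BD13}, which is established by the Sansuc/Harari--Skorobogatov machinery rather than by descending a Brauer class. Roughly, one uses $R^1f_*\Gm=0$ together with the extension $1\to\Gm\to f_*\Gm\to \widehat S_X\to 1$ of \'etale sheaves on $X$ (equivalently, the extended Picard complex formalism), applies the Leray spectral sequence for $f$, and identifies $\varphi$ with the induced map to $H^2(k,\widehat S)\cong\Br_a(S)$; exactness at $\Br_1(Y)$ then falls out of the long exact sequences. To repair your argument you would need to replace the final paragraph by such a Leray/$\UPic$ computation (or simply invoke \cite[Corollary 2.10]{BD13}, as the paper does); as written, the decisive inclusion $\ker\varphi\subseteq\im(f^*)$ is assumed rather than established.
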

\begin{proof} It follows from \cite[Corollary 2.10]{BD13}.
%Let $Y_\eta=: Y\times_k K$, where $K=\kbar(X)$. Then we have $\Pic(Y_\eta)\cong \Pic(\overline S)=0$. Therefore $KD'(Y_\eta) \cong \kbar(Y)^\times/\kbar^\times$.
%
% We have the commutative diagram
%\begin{equation*}
%\begin{CD}
%    Div(\Xbar) @= Div(\Xbar)\\
%    @VVV @VVV \\
%    KD'(X) @>>f> KD'(Y)@>>> Cone(f)\\
%    @VVV @VVV @VVV\\
%    \kbar[X]^\times/\kbar^\times[-1] @>>> KD'(Y_\eta)@>>> K[Y_\eta]^\times/K^\times[-1],
%  \end{CD}
%\end{equation*}
%By the above discussion, the third row is a distinguished triangle.
%Since the first two column are the distinguished triangles, then we have $$Cone(f)= K[Y_\eta]^\times/K^\times[-1]\text{ in the derived category}.$$
%Note that $Y_\eta$ is a principal homogenous space of $S$, then we have  $K[Y_\eta]^\times/K^\times\cong \widehat S.$
%Therefore we have $$Cone(f)= \widehat S[-1]\text{ in the derived category}.$$
%The proof then follows by the Galois cohomology.
\end{proof}

For any geometrically integral variety $X$ over $k$, let $KD'(X)$ be (see \cite{HS13}) the following complex of
$Gal(\kbar/k)$-modules in degrees $-1$ and $0$:
$$[\kbar(X)^\times /\kbar^\times \to \text{Div}(\Xbar)].$$
Up to shift, $KD'(X)$ was introduced
by Borovoi and van Hamel in \cite{BH09} as $\text{UPic}(\Xbar)[1]$. By \cite[Proposition 6]{BH09}, one has $\Br_a(X)\cong H^1(k,KD'(X)).$

\begin{lemma} \label{lem:3} Let $C$ be a $S$-torsor over $k$ and $[C]=-\alpha \in H^1(k,S)$, $Y$ is a $S$-torsor over $X$, $Y^\alpha =: Y\times^S C$  is the twist of $Y$ by $\alpha$.
%We have the commutative diagram
%$$\xymatrix{
%  KD'(Y) \ar[dr]_{f_1} \ar[r]^{\cong}
%                & KD'(Y^\alpha) \ar[d]^{f_2}  \\
%                & H^2(k,\widehat S),             }
%$$
%where $K=\kbar(C)$.
We have the commutative diagram
$$\xymatrix{
  \Br_a(Y) \ar[dr]_{\varphi} \ar[r]^{\cong}
                & \Br_a(Y^\alpha) \ar[d]^{\varphi_\alpha}  \\
                & \Br_a(S).             }
$$
\end{lemma}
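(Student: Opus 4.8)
The plan is to construct the horizontal isomorphism from a geometric comparison of $Y$ and $Y^\alpha$ over $\kbar$, and then to deduce the commutativity of the triangle from the $S$-equivariance of that comparison. The one genuinely non-formal input will be that, because $S$ is connected, the passage from $Y$ to its twist $Y^\alpha$ does not alter the complex $KD'$, so that $\Br_a(Y)\cong\Br_a(Y^\alpha)$ canonically.

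First I would fix $c_0\in C(\kbar)$. As $C$ is an $S$-torsor over $k$ this trivializes $C_\kbar$ and yields an $S$-equivariant $\kbar$-isomorphism over $\Xbar$
$$\phi\colon\Ybar\isoto\overline{Y^\alpha},\qquad y\mapsto[y,c_0],\qquad \phi\circ m=m_\alpha\circ(\id\times\phi).$$
Writing ${}^\sigma c_0=b_\sigma\cdot c_0$ defines a cocycle $(b_\sigma)\in Z^1\!\big(k,S(\kbar)\big)$ with class $[C]=-\alpha$, and transporting the Galois action of $\overline{Y^\alpha}$ through $\phi$ one gets the Galois action of $\Ybar$ twisted by translation by $b_\sigma$.

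Next I would promote $\phi$ to an isomorphism $KD'(Y^\alpha)\cong KD'(Y)$ of complexes of $\Gal(\kbar/k)$-modules; via $\Br_a(\,\cdot\,)\cong H^1(k,KD'(\,\cdot\,))$ this produces the top arrow $\iota\colon\Br_a(Y)\isoto\Br_a(Y^\alpha)$. The point is that the two Galois structures differ only by the \emph{translation} action of $(b_\sigma)$, and translation by $S(\kbar)$ is trivial on the cohomology of $KD'$: a Rosenlicht-type splitting of the units on $S\times Y$ gives $t_s^\ast f=\chi(s)\,f$ for a character $\chi\in\widehat S$, so translation acts trivially on $\kbar[Y]^\times/\kbar^\times$, while a connected group acts trivially on $\Pic(\Ybar)$. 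The main obstacle is to upgrade this triviality on the two cohomology modules to a statement about the two-term complex itself—translation genuinely permutes the prime divisors in $\Div(\Ybar)$—so that the resulting map on the hypercohomology $H^1(k,-)$ is an isomorphism independent of the choice of $(b_\sigma)$. I expect to resolve this either by exhibiting an $S$-functorial chain homotopy between $t_s^\ast$ and the identity on $KD'$, or by replacing $KD'(\Ybar)$ by the quasi-isomorphic complex $\UPic(\Ybar)$, on which the rigidity coming from connectedness of $S$ is manifest.

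Finally, the commutativity of the triangle is formal once $\iota$ is in hand. Both $\varphi$ and $\varphi_\alpha$ are obtained by composing $m^\ast$, respectively $m_\alpha^\ast$, with the projection $\pi_1$ attached to the natural decomposition $\Br_a(S\times Z)\cong\Br_a(S)\oplus\Br_a(Z)$. Applying the same twisting-triviality to the product torsor $S\times Y\to S\times X$ (twisted by $\alpha$ through its $Y$-factor) identifies $\Br_a(S\times Y)\cong\Br_a(S\times Y^\alpha)$, and under this identification the summand $\Br_a(S)$ is preserved, being pulled back from the untwisted factor $S$. Applying $\Br_a$ to the equivariance $\phi\circ m=m_\alpha\circ(\id\times\phi)$ shows that $m^\ast$ and $m_\alpha^\ast$ correspond under these identifications; projecting onto $\Br_a(S)$ then gives $\varphi=\varphi_\alpha\circ\iota$, which is exactly the asserted commutativity.
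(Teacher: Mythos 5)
Your route --- trivializing $C$ at a geometric point $c_0\in C(\kbar)$ and transporting everything through the resulting $S$-equivariant isomorphism $\phi\colon\Ybar\isoto\overline{Y^\alpha}$ --- is genuinely different from the paper's, and it breaks down exactly at the step you yourself flag as the ``main obstacle''. Since $\phi$ is only defined over $\kbar$, with ${}^\sigma\phi=\phi\circ t_{b_\sigma}$, the induced map $\phi^*$ on $KD'$ is not a morphism of complexes of Galois modules, and knowing that each $t_s^*$ acts as the identity on $H^{-1}(KD'(Y))=\kbar[Y]^\times/\kbar^\times$ (Rosenlicht) and on $H^{0}(KD'(Y))=\Pic(\Ybar)$ (connectedness of $S$) is strictly weaker than what you need: an automorphism of a two-term complex that is the identity on its cohomology need not be the identity in the derived category (the discrepancy lives in a quotient of $\Ext^1(\Pic(\Ybar),\kbar[Y]^\times/\kbar^\times)$), and even after one shows that each $t_{b_\sigma}^*$ is homotopic to the identity --- which can in fact be extracted from the decomposition $KD'(S\times Y)\cong KD'(S)\oplus KD'(Y)$, since the $KD'(S)$-component dies upon evaluation at a $\kbar$-point --- one must still choose these homotopies coherently with the cocycle relation for $(b_\sigma)$ before $\phi^*$ induces anything on the hypercohomology $H^1(k,KD'(\cdot))$. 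Neither of your two proposed repairs is carried out, and the $\UPic$ variant faces the same coherence problem. So, as written, the top isomorphism of the triangle (and therefore the triangle itself) is not established.

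The paper avoids choosing $c_0$ altogether: it base-changes to $K=\kbar(C)$, over which $C$ acquires a canonical, Galois-stable point (its generic point), so that $Y_K\cong Y^\alpha_K$ canonically and equivariantly and no twisting cocycle ever appears. The price is to check that $KD'$ is insensitive to the extension $\kbar\subset K$, which uses only the $\kbar$-rationality of $\overline{C}$: units modulo constants are unchanged by [CT07, Proposition 3.1] and $\Pic$ is unchanged by Lemma \ref{lem:1}, whence $KD'(Y)\cong KD'(Y_K)\cong KD'(Y^\alpha_K)\cong KD'(Y^\alpha)$ as complexes of Galois modules; running the same diagrams for $S\times Y$ and $S\times Y^\alpha$ gives the compatibility of $\varphi$ and $\varphi_\alpha$. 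If you want to keep your framework, the cleanest repair is precisely to replace your $c_0$ by this tautological $K$-point.
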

\begin{proof}
Let $K:=\kbar(C)$. For any geometrically integral variety $Y$ over $k$, let $KD'(Y_K)$ be the following complex of
$\Gal(\kbar/k)$-modules in degrees $-1$ and $0$:
$$[K(Y)^\times/K^\times \to \text{Div}(Y_K)].$$
We have the morphism of the distinguished  triangles
\begin{equation}\label{ds:1}
\begin{CD}
    \kbar[Y^\alpha]^\times/\kbar^\times[-1] @>>> KD'(Y^\alpha)@>>> \Pic(\overline{Y^\alpha})\\
    @VV\cong V @VVV @VV\cong V\\
    K[Y^\alpha]^\times/K^\times[-1] @>>> KD'(Y^\alpha_K)@>>> \Pic(Y^\alpha_K).
  \end{CD}
\end{equation}
The first $'\cong'$ follows from \cite[Proposition 3.1]{CT07}, the last $'\cong'$ follows from Lemma \ref{lem:1}. Hence the second map is a quasi-isomorphism.

On the other hand, we have the commutative diagram of distinguished triangles
\begin{equation}\label{ds:2}
\begin{CD}
    K[Y^\alpha]^\times/K^\times[-1] @>>> KD'(Y^\alpha_K)@>>> \Pic(Y^\alpha_K)\\
    @AA\cong A @AA\cong A @AA\cong A\\
    K[Y]^\times/K^\times[-1] @>>> KD'(Y_K)@>>> \Pic(Y_K)\\
    @AA\cong A @AAA @AA\cong A\\
    \kbar[Y]^\times/\kbar^\times[-1] @>>> KD'(Y)@>>> \Pic(\Ybar).
  \end{CD}
\end{equation}
The last isomorphism follows from Lemma \ref{lem:1}.
%Then the isomorphism $\Br_a(Y) \cong \Br_a(Y^\alpha)$ follows from
Then the quasi-isomorphism $KD'(Y)\cong KD'(Y^\alpha)$  follows from the commutative diagram (\ref{ds:1}) and (\ref{ds:2}).
Then $\Br_a(Y) \cong \Br_a(Y^\alpha)$ by \cite[Corollary 2.20]{BH09}.
%By the definition of $\varphi$, the proof follows from the following commutative diagram

One has the following commutative diagram
\begin{equation}\label{ds:3}
\begin{CD}
    KD'(Y^\alpha) @>>> KD'(S\times Y^\alpha) @>>>KD'(S)\\
    @VV\cong V @VV\cong V @VV\cong V\\
    KD'(Y^\alpha_K)@>>> KD'(S_K\times Y^\alpha_K)@>>>KD'(S_K)\\
    @AA\cong A @AA\cong A @AA\cong A\\
    KD'(Y)@>>> KD'(S\times Y)@>>>KD'(S),
  \end{CD}
\end{equation}
the morphisms $\varphi_\alpha, \varphi$ are respectively induced by the composite maps of the first and third rows, then the proof follows.
\end{proof}

%\begin{lemma}\label{lem:4} Let $F$ be a field of characteristic $0$, $G$ be a connected linear algebraic group over $F$, $Z$ a $G$-torsor, then
%
%i) $\Br_e(G)\cong Br_a(G)\cong \Br_a(Z)$.
%
%ii) Let $x\in Z(F)$, $e$ is the identity of $G$, $b\in \Br(Z)$, let $\tilde{b}\in \Br_e(G)$ such that $\tilde{b}-b=0\in Br_a(Z)$. Then $$b(x.g)=b(x)+\tilde{b}(g).$$
%
%iii) Suppose $F$ is a number field, $x_\A\in Z(\A_F), g_\A\in G(\A_F), b\in \Br(Z)$ and $\tilde{b}\in \Br_e(G)$ such that the image of $\tilde{b}$ in $\Br_a(Z)$ under the isomorphism in $i)$ is equal to that of $b$, then $$(b,x_\A.g_\A)=(b,x_\A)+(\tilde{b}, g_\A).$$
%\end{lemma}
%\begin{proof} It follows from [B-D, SA, corollary 3.4 ].
%\end{proof}

\begin{lemma} \label{lem:4} Let $X$ and $Y$ be smooth geometrically integral varieties over $k$, and $Y$ geometrically rational, then $\Br_a(X\times Y)\cong \Br_a(X) \times \Br_a(Y)$.
\end{lemma}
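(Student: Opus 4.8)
The plan is to reduce the statement to a computation of the two nonzero cohomology modules of the complex $KD'(X\times Y)$, using the Borovoi--van Hamel identification $\Br_a(-)\cong H^1(k,KD'(-))$ recalled above (\cite[Proposition 6]{BH09}). Recall that $KD'(X)=[\kbar(X)^\times/\kbar^\times\to \Div(\Xbar)]$ is concentrated in degrees $-1,0$ with cohomology $H^{-1}=\kbar[X]^\times/\kbar^\times$ and $H^0=\Pic(\Xbar)$. The two projections $\pi_X\colon X\times Y\to X$ and $\pi_Y\colon X\times Y\to Y$ are flat and dominant, so each induces a pullback map of complexes, and I would assemble them into the canonical $\Gal(\kbar/k)$-equivariant morphism
$$\pi_X^*\oplus\pi_Y^*\colon KD'(X)\oplus KD'(Y)\longrightarrow KD'(X\times Y).$$
Since this map is natural and Galois-equivariant, it suffices to show it is a quasi-isomorphism; applying $H^1(k,-)$ and using that hypercohomology commutes with the finite direct sum then yields $\Br_a(X\times Y)\cong\Br_a(X)\times\Br_a(Y)$ at once.

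To verify that it is a quasi-isomorphism I would check the induced maps on $H^{-1}$ and $H^0$ separately. On $H^{-1}$ the assertion is the classical splitting $\kbar[X\times Y]^\times=\kbar^\times\cdot\kbar[X]^\times\cdot\kbar[Y]^\times$ (Rosenlicht), which gives $\kbar[X\times Y]^\times/\kbar^\times\cong(\kbar[X]^\times/\kbar^\times)\oplus(\kbar[Y]^\times/\kbar^\times)$ and needs no rationality hypothesis. On $H^0$ the assertion is the K\"unneth-type isomorphism $\Pic(\overline{X\times Y})\cong\Pic(\Xbar)\oplus\Pic(\Ybar)$, realized by $(L,M)\mapsto\pi_X^*L\otimes\pi_Y^*M$, and this is where geometric rationality of $Y$ enters.

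For the Picard decomposition I would argue as follows. Let $K=\kbar(Y)$; the generic fibre of $\pi_Y$ over $\Ybar$ is $\Xbar\times_\kbar K=X_K$, and Lemma \ref{lem:1} applied with $C=Y$ gives $\Pic(\Xbar)\cong\Pic(X_K)$. Passing to the limit in the localization sequences for the open sets $\pi_Y^{-1}(U)$, $U\subseteq\Ybar$ nonempty open, I obtain the exact sequence $\Pic(\Ybar)\xrightarrow{\pi_Y^*}\Pic(\overline{X\times Y})\to\Pic(X_K)\to 0$, since every removed prime divisor is a vertical divisor $\Xbar\times D$ (irreducible, as $\Xbar$ is geometrically integral) with class $\pi_Y^*[D]$. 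Restricting to the generic fibre splits $\pi_X^*$ (the composite $\Pic(\Xbar)\xrightarrow{\pi_X^*}\Pic(\overline{X\times Y})\to\Pic(X_K)\cong\Pic(\Xbar)$ is the identity), while restricting to a slice $\{x_0\}\times\Ybar$ shows $\pi_Y^*$ is injective; together these give $\Pic(\overline{X\times Y})=\pi_X^*\Pic(\Xbar)\oplus\pi_Y^*\Pic(\Ybar)$.

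I expect the main obstacle to be precisely this Picard computation, and within it the identification of the kernel of restriction to the generic fibre with the image of $\pi_Y^*$, i.e. that a line bundle on $\overline{X\times Y}$ trivial on $X_K$ is pulled back from $\Ybar$. Lemma \ref{lem:1} is the crucial input ensuring that the generic-fibre Picard group is exactly $\Pic(\Xbar)$ and not something larger, and it is the only place where the rationality of $Y$ is used; by comparison the units statement and the formal assembly of $\pi_X^*\oplus\pi_Y^*$ into a quasi-isomorphism are routine. Once the two cohomology isomorphisms are established, the conclusion is immediate.
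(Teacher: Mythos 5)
Your proof is correct and follows essentially the same route as the paper: both reduce the statement to the splitting $KD'(X\times Y)\simeq KD'(X)\oplus KD'(Y)$ and then apply the identification $\Br_a(-)\cong H^1(k,KD'(-))$ of \cite[Proposition 6]{BH09}. The only difference is that the paper simply cites \cite[Lemma 5.1]{BH09} for that splitting, whereas you prove it directly (Rosenlicht's unit theorem in degree $-1$, and the localization sequence together with Lemma \ref{lem:1} for the Picard groups in degree $0$), which is a correct, self-contained substitute for the citation.
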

\begin{proof}
By \cite[Lemma 5.1]{BH09}, one has $KD'(X\times Y)\cong KD'(X)\times KD'(Y)$, the proof follows from \cite[Proposition 6]{BH09}.
\end{proof}

\begin{lemma}\label{lem:5} Let $G$ be a connected linear algebraic group over $k$. Then

i) The multiplication $m: G\times G \rightarrow G$ induces a map $$m^*: \Br_a(G)\rightarrow \Br_a(G)\times \Br_a(G)$$ such that $m^*(b)=(b,b)\in \Br_a(G)\times \Br_a(G).$

ii) The map $\tau:G\times G\times G \rightarrow G\times G$ is given by $\tau(g_1,g_2,g_3)=(g_1g_3,g_2g_3)$. Then it  induces the map $$\tau^*: \Br_a(G)\times \Br_a(G) \rightarrow \Br_a(G)\times \Br_a(G)\times\Br_a(G)$$ such that $\tau^*(b_1,b_2)=(b_1,b_2,b_1+b_2).$
\end{lemma}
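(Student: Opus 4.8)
The plan is to deduce both statements from the functoriality of the K\"unneth-type decomposition of Lemma \ref{lem:4}. Since $G$ is a connected linear algebraic group, $\Gbar$ is a rational variety, so $G$ is geometrically rational and Lemma \ref{lem:4} applies with $G$ in the role of the rational factor. Applying it once gives $\Br_a(G\times G)\cong \Br_a(G)\times \Br_a(G)$, and applying it twice gives $\Br_a(G\times G\times G)\cong \Br_a(G)\times\Br_a(G)\times\Br_a(G)$. I would use that, by the construction of this isomorphism from \cite[Lemma 5.1]{BH09}, it is induced by the projections: the inverse map sends $(a_1,\dots,a_n)$ to $\sum_i p_i^* a_i$, where $p_i$ denotes the $i$-th projection. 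Dually, writing $e\in G(k)$ for the identity and $s_i$ for the section inserting $e$ in every slot but the $i$-th, the $i$-th component of a class $c$ is recovered as $s_i^* c$; here the cross terms vanish because $p_j\circ s_i$ (for $j\ne i$) is a constant map, which factors through $\mathrm{Spec}\,k$ and hence acts as $0$ on $\Br_a$ (since $\Br_a(\mathrm{Spec}\,k)=0$).

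For (i), I would compute the two components of $m^*(b)$ directly. With $s_1(g)=(g,e)$ and $s_2(g)=(e,g)$ one has $m\circ s_1=m\circ s_2=\id_G$, so both components $s_1^* m^* b$ and $s_2^* m^* b$ equal $b$; that is, $m^*(b)=(b,b)$.

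For (ii), I would first write the class $(b_1,b_2)\in\Br_a(G\times G)$ as $p_1^* b_1+p_2^* b_2$, so that
$$\tau^*(b_1,b_2)=(p_1\circ\tau)^* b_1+(p_2\circ\tau)^* b_2.$$
The key point is the factorisation $p_1\circ\tau=m\circ\pi_{13}$ and $p_2\circ\tau=m\circ\pi_{23}$, where $\pi_{13},\pi_{23}\colon G^3\to G\times G$ drop the second, resp. the first, coordinate. Substituting the result of (i), namely $m^* b_i=q_1^* b_i+q_2^* b_i$ with $q_1,q_2$ the two projections of $G\times G$, and using $q_1\circ\pi_{13}=P_1$, $q_2\circ\pi_{13}=P_3$, $q_1\circ\pi_{23}=P_2$, $q_2\circ\pi_{23}=P_3$ (with $P_1,P_2,P_3$ the projections of $G^3$), I obtain
$$\tau^*(b_1,b_2)=P_1^* b_1+P_2^* b_2+P_3^*(b_1+b_2),$$
which under $\Br_a(G^3)\cong\Br_a(G)^3$ is exactly $(b_1,b_2,b_1+b_2)$.

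The only genuine obstacle is the bookkeeping: one must know that the isomorphism of Lemma \ref{lem:4} is natural with respect to the morphisms in play, i.e. that it is induced by the projections and that constant-map pullbacks kill $\Br_a$. Once this is pinned down at the level of the complexes $KD'$, both parts become formal consequences of the identities $m(g,e)=m(e,g)=g$ and the associativity-type factorisations above, so no further geometric input is needed.
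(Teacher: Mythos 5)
Your proof is correct and takes essentially the same approach as the paper: part (i) is identical, recovering the two components of $m^*b$ by pulling back along the sections $g\mapsto(g,e)$ and $g\mapsto(e,g)$ and using that constant maps act as zero on $\Br_a$. For part (ii) you organise the bookkeeping through the projections $\pi_{13},\pi_{23}$ whereas the paper factors $\tau$ as $m\circ(\mathrm{id}\times\Delta)$ through the diagonal, but both computations rest on the same inputs (the K\"unneth decomposition being induced by projections, and the identities $m(g,e)=m(e,g)=g$) and yield the same result.
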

\begin{proof} Let $i_1: G\rightarrow G\times G$ be given by $i_1(g)=(g,1_G)$, where $1_G$ is the identity element of $G$. Then we can see $id_G=m\circ i_1$. Therefore $$Id=i_1^* \circ m^*.$$ Choose $b\in \Br_a(G)$, suppose $m^*(b)=(x,y)$, then we have $$b=i_1^*(x,y).$$
Let $p_1,p_2$ be the two projections of $G\times G$ to $G$. Then $id_G=p_1\circ i_1$ and $0=p_2\circ i_1$. Hence $Id=i_1^* \circ p_1^*$ and $0=i_1^* \circ p_2^*$. Therefore $i_1^*(x,0)=x$ and $i_1^*(0,y)=0$, $i.e.$, $i_1^*(x,y)=x$. Therefore $b=i_1^*(x,y)=x$. Similarly, we replace $i_1$ by  $i_2: G\rightarrow G\times G$ which is given by $i_2(g)=(1_G,g)$, we will have $b=i_2^*(x,y)$ and $i_2^*(x,y)=y$, hence $b=y$. Therefore $m^*(b)=(b,b)$, we complete the proof of part $i)$.

We can see $\tau$ is the composite map
$$(G\times G)\times G \xrightarrow{id\times \Delta} (G\times G)^2\xrightarrow{m} G\times G,$$
where $\Delta: G\rightarrow G\times G$ is the diagonal map and $m(g_1,g_2,g_1',g_2')=(g_1g_1',g_2g_2')$.
Let $p_1,p_2$ be the two projections of $G\times G$ to $G$, we have $id_G=p_1\circ \Delta =p_2\circ \Delta$. Therefore we have $Id=\Delta^*\circ p_1^*=\Delta^*\circ p_2^*$. Hence $x=\Delta^*( p_1^*(x))=\Delta^*(x,0)$ and $y=\Delta^*( p_2^*(y))=\Delta^*(0,y)$. Therefore $$\Delta^*(x,y)=x+y.$$
Since $\tau=m\circ (id,\Delta)$, one has
$$\tau^*=(id^*,\Delta^*)\circ m^*.$$
By $i)$, we have  $m^*(b_1,b_2)=(b_1,b_2,b_1,b_2)$. Hence $$\tau^*(g_1,g_2)=(id^*,\Delta^*)(b_1,b_2,b_1,b_2)=(b_1,b_2,b_1+b_2).\qedhere$$
\end{proof}

Let $\eta$ is the generic point of $X$ and $L=k(X)$, then $Y_\eta$ is a principal homogeneous space of $S_L$. Let $\Br_1(Y_\eta):=\ker[\Br(Y_\eta)\rightarrow \Br(Y_\eta\times_L\overline L)]$, and $\Br_a(Y_\eta)$ be the quotient of $\Br(L)\to \Br_1(Y_\eta)$. Then we have the canonical isomorphism
\begin{equation}\label{eq:eta2}
\Br_a(S) \cong \Br_a(Y_\eta).
\end{equation}
The natural morphism $$\Br_1(Y)\to \ker[\Br(Y_\eta)\rightarrow \Br_a(Y_\eta\times_L\overline L)]$$ gives the following natural morphism
\begin{equation}\label{eq:eta}
\Br_a(Y)\rightarrow \Br_a(Y_\eta)\cong\Br_a(S).
\end{equation}

\begin{lemma}\label{lem:6} Let $Y$ be a $S$-torsor of $X$. the map $\varphi: \Br_a(Y)\rightarrow \Br_a(S)$ is coincide with the natural map in (\ref{eq:eta}).
\end{lemma}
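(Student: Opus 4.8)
The plan is to reduce the comparison to the generic fibre of $Y$ over $X$, where the map (\ref{eq:eta}) is defined, and to recognise there that both maps are induced by one and the same action morphism.

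First I would use the naturality of the construction of $\varphi$ under the localisation $\mathrm{Spec}\,L\to X$, with $L=k(X)$. The $S$-action $m\colon S\times Y\to Y$ has generic fibre the $S_L$-action $m_\eta\colon S_L\times_L Y_\eta\to Y_\eta$ presenting $Y_\eta$ as a principal homogeneous space of $S_L$. Writing $\varphi_\eta$ for the map built from $m_\eta$ exactly as $\varphi$ is built from $m$, the functoriality of pullback by the action morphisms, together with the compatibility of the product decomposition of Lemma~\ref{lem:4} with base change, produces the commutative square
\[
\begin{CD}
\Br_a(Y) @>\varphi>> \Br_a(S)\\
@VVV @VVV\\
\Br_a(Y_\eta) @>\varphi_\eta>> \Br_a(S_L),
\end{CD}
\]
whose vertical arrows are restriction to the generic fibre. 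This reduces the statement to the fibre over $\eta$: it suffices to identify $\varphi_\eta$ with the inverse of the canonical isomorphism (\ref{eq:eta2}), since by definition the map (\ref{eq:eta}) is the composite of the left vertical arrow with $(\ref{eq:eta2})^{-1}$.

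The main step is therefore to show that, for the principal homogeneous space $Y_\eta$ under $S_L$, the action map $\varphi_\eta$ realises the canonical identification $\Br_a(Y_\eta)\cong\Br_a(S_L)$ of (\ref{eq:eta2}). Here I would unwind (\ref{eq:eta2}) through the complexes $KD'$: the canonical isomorphism comes from the quasi-isomorphism $KD'(Y_\eta)\cong KD'(S_L)$ induced by the translation action, which is precisely a twisting quasi-isomorphism of the type already established in Lemma~\ref{lem:3}. Tracing $\varphi_\eta$ through the commutative diagram of $KD'$-complexes attached to $m_\eta$ — the generic-fibre analogue of diagram (\ref{ds:3}) — and comparing it with this trivialisation shows that the two agree, so that $\varphi_\eta$ is the inverse of (\ref{eq:eta2}). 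Combined with the square above, this yields the lemma.

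The step I expect to be most delicate is the bookkeeping of the two groups denoted $\Br_a(S)$: the target of $\varphi$ is the algebraic Brauer group of $S$ over $k$, whereas $\varphi_\eta$ a priori lands in $\Br_a(S_L)$. One must check that the restriction map $\Br_a(S)\to\Br_a(S_L)$ is compatible with the identification (\ref{eq:eta2}) and with the splitting of Lemma~\ref{lem:4}, so that the square descends to an equality of maps into $\Br_a(S)$; making precise the sense in which (\ref{eq:eta2}) identifies the relevant classes over $k$, and confirming that passing to the generic fibre loses no information, is where the argument has to be pinned down.
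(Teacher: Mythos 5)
Your proposal has the same skeleton as the paper's proof: both reduce to the generic fibre by naturality of the construction of $\varphi$ under $\operatorname{Spec} L\to X$, $L=k(X)$, producing a commutative square whose left vertical arrow is the map (\ref{eq:eta}) and whose rows are the two maps to be compared. Where you differ is in how the generic-fibre statement is then settled. The paper does not go back into the $KD'$-complexes at this point: it uses (\ref{eq:eta2}) (together with the identification $\Br_a(S)\cong\Br_a(S_L)$) to rewrite the bottom row of the square as $\Br_a(S)\xrightarrow{m^*}\Br_a(S\times S)\cong\Br_a(S)\times\Br_a(S)\to\Br_a(S)$, i.e.\ the same construction applied to the torus acting on itself by multiplication, and then invokes Lemma~\ref{lem:5}(i) (namely $m^*(b)=(b,b)$) to conclude that this composite is the identity; commutativity of the square then gives the lemma. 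Your plan of ``tracing $\varphi_\eta$ through the $KD'$-diagram and comparing with the trivialisation'' is precisely where the content sits, and once unwound it must come down to the same non-formal input --- that for the trivial torsor the action-induced map is the identity --- which is exactly Lemma~\ref{lem:5}(i); you should invoke it (or reprove its content) explicitly rather than leave the comparison as an unspecified diagram chase. Your closing caveat about $\Br_a(S)$ versus $\Br_a(S_L)$ is well taken: the paper passes over this identification silently, and it is genuinely the point that needs pinning down in either version of the argument.
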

\begin{proof}
Now we consider the image of $\varphi$.
We have the commutative diagram
 \begin{equation*}
\begin{CD}
    \Br_a(Y) @>m^*>> \Br_a(S \times Y) @>\cong>> \Br_a(S)\times \Br_a(Y) @>>> \Br_a(S)\\
    @VV V @VVV @VVV @VVV\\
   \Br_a(Y_\eta) @>m^*>> \Br_a(S_L \times Y_\eta) @>\cong>> \Br_a(S_L)\times \Br_a(Y_\eta)@>>> \Br_a(S_L),
  \end{CD}
\end{equation*}
Applying (\ref{eq:eta2}) and (\ref{eq:eta}), the above commutative diagram can be written as the following
 \begin{equation*}
\begin{CD}
    \Br_a(Y) @>m^*>> \Br_a(S \times Y) @>\cong>> \Br_a(S)\times \Br_a(Y) @>>> \Br_a(S)\\
    @VV V @VVV @VV V @VVV\\
   \Br_a(S) @>m^*>> \Br_a(S \times S) @>\cong>> \Br_a(S)\times \Br_a(S)@>>> \Br_a(S).
  \end{CD}
\end{equation*}
By part i) of Lemma \ref{lem:5}, the composite map of the second row is $\varphi$ and the composite map of the second row is the identity map.
The proof follows from the above commutative diagram.
\end{proof}

\begin{theorem}\label{thm:1}
 Let $X$ be a smooth and geometrically integral variety over $k$ (do not require $\kbar[X]^\times=\kbar^\times$).  Let $S$ be a torus with a morphism $\chi: \widehat {S}\rightarrow KD'(X)$. Let $\mathcal T$ be a torus of $X$ with the extended type $\chi$. Then we have
$$X(\A_k)^{\Br_1(X)}=\bigcup_{f: \mathcal T\xrightarrow{\chi} X}f(\mathcal T(\A_k)^{\Br_1(\mathcal T)}).$$
\end{theorem}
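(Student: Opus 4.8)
The plan is to prove the two inclusions separately. The inclusion $\supseteq$ is formal, while $\subseteq$ is the substance and is where \cite[Theorem 2]{Har08} enters. Throughout I write $f^\sigma\colon\mathcal T^\sigma\to X$ for the twist of $f$ by a class $\sigma\in H^1(k,S)$; as $\sigma$ ranges over $H^1(k,S)$ these are exactly the torsors of type $\chi$ in the union, since by Lemma \ref{lem:3} twisting preserves the extended type $\chi$ and is compatible with $\varphi$, so $\varphi_\sigma$ sits in the exact sequence of Lemma \ref{lem:2} for $\mathcal T^\sigma$. For $\supseteq$, take $P=(P_v)\in\mathcal T^\sigma(\A_k)^{\Br_1(\mathcal T^\sigma)}$ and set $x=f^\sigma(P)$; for $b\in\Br_1(X)$ functoriality gives $b(x_v)=(f^\sigma)^*b(P_v)$, and $(f^\sigma)^*b\in\Br_1(\mathcal T^\sigma)$ by Lemma \ref{lem:2}, so $\sum_v\mathrm{inv}_v(b(x_v))=\sum_v\mathrm{inv}_v((f^\sigma)^*b(P_v))=0$ and $x\in X(\A_k)^{\Br_1(X)}$.

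For $\subseteq$, fix $x=(x_v)\in X(\A_k)^{\Br_1(X)}$. For each $v$ let $\beta_v=[x_v^*\mathcal T]\in H^1(k_v,S)$ be the class of the fibre of $f$ over $x_v$; spreading out over a ring of $T$-integers shows $\beta_v=0$ for almost all $v$, so $(\beta_v)_v$ lies in the restricted product. The key compatibility is that for $\xi\in H^1(k,\widehat{S})$, writing $\chi_*(\xi)\in\Br_a(X)$ for the image of $\xi$ under $\chi\colon\widehat{S}\to KD'(X)$, one has $\mathrm{inv}_v(\chi_*(\xi)(x_v))=\langle\beta_v,\xi_v\rangle_v$, the local Tate pairing $H^1(k_v,S)\times H^1(k_v,\widehat{S})\to\QQ/\ZZ$; this follows from the definition of the type together with Lemma \ref{lem:6}. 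Summing and using $x\perp\Br_1(X)$ gives $\sum_v\langle\beta_v,\xi_v\rangle_v=0$ for every $\xi\in H^1(k,\widehat{S})$. By Poitou--Tate duality for the torus $S$, i.e. \cite[Theorem 2]{Har08}, orthogonality of $(\beta_v)_v$ to all of $H^1(k,\widehat{S})$ forces $(\beta_v)_v$ to come from a global class $\sigma\in H^1(k,S)$, so that $\sigma_v=\beta_v$ for all $v$.

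Twisting by this $\sigma$ trivialises every fibre of $f^\sigma$ over $x_v$, so $x_v$ lifts to $P_v\in\mathcal T^\sigma(k_v)$, integral for almost all $v$; these assemble to $P=(P_v)\in\mathcal T^\sigma(\A_k)$ with $f^\sigma(P)=x$. It remains to move $P$ into the Brauer--Manin set. Since $S$ is geometrically rational, Lemma \ref{lem:4} identifies $\Br_a(S\times\mathcal T^\sigma)$ with $\Br_a(S)\times\Br_a(\mathcal T^\sigma)$, and combining the definition of $\varphi_\sigma$ with Lemma \ref{lem:5} applied to $G=S$ yields, for $b\in\Br_1(\mathcal T^\sigma)$ and $s=(s_v)\in S(\A_k)$ acting fibrewise, the affine formula $\sum_v\mathrm{inv}_v(b(s_v\cdot P_v))=\sum_v\mathrm{inv}_v(b(P_v))+\langle\varphi_\sigma(b),s\rangle$, where $s\mapsto\langle\varphi_\sigma(b),s\rangle$ is the summed local pairing, additive in $s$ by part ii) of Lemma \ref{lem:5}. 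Writing $c(b)=\sum_v\mathrm{inv}_v(b(P_v))$, the computation in $\supseteq$ shows $c$ vanishes on $(f^\sigma)^*\Br_1(X)=\ker\varphi_\sigma$, so $c$ descends to a functional $\bar c$ on $\im\varphi_\sigma\subseteq\Br_a(S)$; choosing $s\in S(\A_k)$ with $\langle\varphi_\sigma(b),s\rangle=-c(b)$ then puts $s\cdot P\in\mathcal T^\sigma(\A_k)^{\Br_1(\mathcal T^\sigma)}$ above $x$.

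The main obstacle is the duality mechanism for $S$ on which both gluing steps rest. First, one must make precise that the Brauer--Manin condition on $X$ descends $(\beta_v)_v$ to a global twist: this is the Poitou--Tate exactness of \cite[Theorem 2]{Har08} combined with the type-compatibility formula identifying the Brauer pairing on $X$ with the Tate pairing on fibres. Second, one must show the residual functional $\bar c$ on $\im\varphi_\sigma$ is realised by an adelic point $s\in S(\A_k)$; this again uses the non-degeneracy coming from duality for $S$ and a reciprocity argument guaranteeing that $\bar c$ annihilates the subgroup on which the pairing with $S(\A_k)$ is degenerate, so that a valid $s$ exists.
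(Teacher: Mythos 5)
Your overall architecture is the same as the paper's: produce an adelic point on a suitable twist $\mathcal T^\sigma$, note that the failure of that point to be orthogonal to $\Br_1(\mathcal T^\sigma)$ is measured by a functional $\bar c$ on $\im\,\varphi_\sigma\subseteq\Br_a(S)$ (well defined because $c$ kills $\ker\varphi_\sigma=(f^\sigma)^*\Br_1(X)$), and then realise $-\bar c$ by a point $s\in S(\A_k)$ acting on the fibres. The first half (descending the local fibre classes $(\beta_v)$ to a global $\sigma$) is essentially a sketch of the Harari--Skorobogatov open descent, which the paper simply cites; that part is fine in spirit, though the relevant duality there is the Poitou--Tate exact sequence for $H^1$ of tori rather than \cite[Theorem 2]{Har08}, which concerns $S(\A_k)\to\Br_a(S)^D\to\Sh^2(\widehat S)^D\to 0$.

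The genuine gap is in the last step. Precisely because of \cite[Theorem 2]{Har08}, the map $S(\A_k)\to\Br_a(S)^D$ is \emph{not} surjective: its cokernel is $\Sh^2(\widehat S)^D$. So an $s\in S(\A_k)$ with $\langle\varphi_\sigma(b),s\rangle=-c(b)$ exists only if $\bar c$ admits an extension to $\Br_a(S)$ vanishing on $\Sh^2(\widehat S)$, i.e.\ only if $\bar c$ already vanishes on $\im\,\varphi_\sigma\cap\Sh^2(\widehat S)$. You relegate this to ``a reciprocity argument,'' but it is not automatic: for $\mathcal B\in\varphi^{-1}(\Sh^2(\widehat S))$ the sum $\sum_v\mathcal B(P_v)$ is a constant independent of the lift $(P_v)$, and that constant need not be zero for your $\sigma$. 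The class $\sigma$ is determined by $(\beta_v)$ only up to $\Sh^1(S)$, and replacing $\sigma$ by $\sigma+\alpha$ with $\alpha\in\Sh^1(S)$ shifts the constant by the Poitou--Tate pairing $\langle\alpha,\mathcal B\rangle_{PT}$ (Sansuc's Lemma 8.4, which the paper computes via the auxiliary $S$-torsor $\T\times C\to\T^\alpha$ and the formula $\psi(\mathcal B)=(\mathcal B,-\tilde{\mathcal B})$). The paper's proof devotes most of its length to exactly this correction: it defines the functional $\tilde\alpha$ on $\im\,\varphi\cap\Sh^2(\widehat S)$, uses the duality $\Sh^1(S)\cong\Sh^2(\widehat S)^*$ to choose the particular twist killing it, and only then extends $\rho$ to $\tilde\rho$ with $\tilde\rho|_{\Sh^2(\widehat S)}=0$ and applies \cite[Theorem 2]{Har08}. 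Without this extra twist, chosen by duality rather than by reciprocity, your final sentence does not go through.
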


\begin{proof} $"\supset"$ is obvious, so we only need to show $"\subset"$.

Let $(p_v)_v\in X(\A_k)^{\Br_1(X)}$, by the open descent (\cite{HS13}), there is a $(q_v)\in \mathcal T(\A_k)$ for some $\mathcal T$, such that $f(q_v)=p_v$, for all $v$, and for all $\mathcal A\in \Br_1(X)$,
$$\sum_vf^*(\mathcal A)(q_v)=\sum_v \mathcal A(p_v)=0.$$

By Lemma \ref{lem:2}, we have the exact sequence $$\Br_1(X)\rightarrow \Br_1(\T) \xrightarrow{\varphi} \Br_a(S).$$
Let $\mathcal B \in \varphi^{-1}(\Sh^2(\widehat S))$, for any $(q_v)_v\in \T(\A_k)$, $\mathcal B(q_v)$ is a constant which does not depend on the choice of $q_v$, hence $\sum_v \mathcal B(q_v)$ is also a constant.  If  $\varphi(\mathcal B)=0$, then $\mathcal B$ is the image of an element $\AA$ of $\Br_1(X)$, hence $$\sum_v \mathcal B(q_v)=\sum_v\AA(p_v)=0.$$
Therefore, for any $\mathcal C\in Ima(\varphi)\cap \Sh^2(\widehat S)$, choose $\mathcal B\in \Br_1(\T)$ such that $\varphi(\mathcal B)=\mathcal C$, the sum $\sum_v\mathcal B(q_v)$ does not depend on the choice of $(q_v)_v$ and $\mathcal B$ by the above argument. Then we define a morphism
$$\tilde \alpha: Ima(\varphi)\cap \Sh^2(\widehat S) \to \Q/\Z.$$

First we will Modify $\T$ by its twist, such that $\tilde \alpha$ is a zero map.

By Poito-Tate duality, we can choose $\alpha\in \Sh^1(S)\cong \Sh^2(\widehat S)^*$, such that the restriction of $\alpha$ on $Ima(\varphi)\cap \Sh^2(\widehat S)$ is just $\tilde \alpha$.
Let $\T^\alpha$ is the twist of $\T$ by $\alpha$, let $[C]=-\alpha\in H^1(k,S)$, then $\T^\alpha=\T\times^S C$. Since $\alpha\in \Sh^1(S)$, we can see $(p_v)_v$ is also the image of $\T^\alpha(\A_k)$.
By similar definition of $\tilde \alpha$, we can also define a morphism $$Ima(\Br_1(\T^\alpha))\cap \Sh^2(\widehat S) \to \Q/\Z.$$
In the following, we will show this map is zero.

We can see $\T\times C$ is a $S$-torsor of $\T^\alpha$.
%Similarly the diagram (\ref{ds:3}), we have the commutative diagram
%%\begin{equation*}
%%\begin{CD}
%%    (\kbar[X]^\times/\kbar^\times\times \kbar[Y]^\times/\kbar^\times)[-1] @>>> KD'(X)\times KD'(Y) @>>> \Pic(\Xbar)\times \Pic(\Ybar)\\
%%    @VV\simeq V @VVV @VV\simeq V\\
%%    \kbar[X\times Y]^\times/\kbar^\times[-1] @>>> KD'(X\times Y)@>>> \Pic(\Xbar\times \Ybar),
%%  \end{CD}
%%\end{equation*}
%\begin{equation}
%\begin{CD}
%    KD'(\T^\alpha) @>>> KD'(\T\times C)\\
%    @VV\cong V @VV\cong V\\
%    KD'(\T^\alpha_K)@>>> KD'(\T_K\times C_K)\\
%    @VV\cong V @VV\cong V\\
%    KD'(\T)@>>> KD'(\T\times S).
%  \end{CD}
%\end{equation}
Let $K=k(C)$, choose $c$ be a fixed point of $C(K)$, then we have the morphism $\T_K \rightarrow \T_K\times C_K$ given by $p\mapsto (p,c)$. Since $k(C)\cap \kbar=k$, this morphism induces a morphism $ \Br_a(\T)\times \Br_a(C) \cong \Br_a(\T\times C) \rightarrow \Br_a(\T) $ which is given by $(\mathcal B,\mathcal C) \mapsto \mathcal B$. Since the composite map  $\T_K \rightarrow \T_K\times C_K\rightarrow \T_K^\alpha$ is an isomorphism, it induces an isomorphism $\Br_a(\T^\alpha) \rightarrow \Br_a(\T)$ which is just the isomorphism in Lemma \ref{lem:3}.

By Lemma \ref{lem:2}, we have the exact sequence
\begin{equation} \label{exa:br}
\Br_a(\T^\alpha)\xrightarrow{\psi} \Br_a(\T\times C)\xrightarrow{\varphi} \Br_a(S).
\end{equation}
By Lemma \ref{lem:4}, we have the natural isomorphism $\Br_a(\T\times C)\cong \Br_a(\T)\times \Br_a(C)$. For any $\mathcal B\in \Br_a(\T^\alpha)$, we have $$\psi(\mathcal B)=(\mathcal B,\mathcal C)\in \Br_a(\T)\times \Br_a(C).$$

Let $\eta$ is the generic point of $X$ and $L=k(X)$, then $Y_\eta$ is a principal homogeneous space of $S_L$.
%Let $\eta$ is the generic point of $X$, then $Y_\eta$ is a principal homogeneous space of $S$.
%Then we have the natural map $\Br_a(Y)\to \ker[\Br(Y_\eta)\rightarrow \Br(Y_\eta\times_{k(X)}\kbar(X))]$ and we can see $$\Br_a(S)=\ker[\Br(Y_\eta)\rightarrow \Br(Y_\eta\times_{k(X)}\kbar(X))].$$
%Then there is a natural map
%\begin{equation}\label{eq:eta1}
%\Br_a(Y)\rightarrow \Br_a(S)\subset \Br_a(Y_\eta).
%\end{equation}
%
%Since $\Br_a (Y_\eta)\cong H^2(k(X), \widehat S)$ and $ $
%We have the morphism $KD'(Y)\to [\kbar(Y)^\times/\kbar(X)^\times \to Div(Y_\eta\times_{k(X)} \kbar(X) )]$.
%Let $L=k(Y)$, we can see $\Pic(Y_\eta\times_{k(X)} \kbar(X))=\Pic()$
Now we consider the image of $\varphi$, note that $\T\times C$ is a $S$-torsor of $\T^\alpha$.
Let $L=k(X)$, we have the commutative diagram
 \begin{equation*}
\begin{CD}
    \Br_a(Y\times C) @>m^*>> \Br_a(S \times Y\times C) @>>> \Br_a(S)\times \Br_a(Y)\times \Br_a(C) @>>> \Br_a(S)\\
    @VV V @VVV @VVV @VVV\\
   \Br_a(Y_\eta\times C_L) @>m^*>> \Br_a(S_L \times Y_\eta\times C_L) @>>> \Br_a(S_L)\times \Br_a(Y_\eta)\times \Br_a(C_L)@>>> \Br_a(S_L),
  \end{CD}
\end{equation*}
Since $C$ is a $S$-torsor, one has
\begin{equation}\label{eq:eta1}
\Br_a(C_L)\cong \Br_a(S)\cong \Br_a(S_L).
\end{equation}
By the two isomorphisms (\ref{eq:eta2}) and (\ref{eq:eta1}), the above commutative diagram  gives the commutative diagram
 \begin{equation*}
\begin{CD}
    \Br_a(Y\times C) @>m^*>> \Br_a(S \times Y\times C) @>>> \Br_a(S)\times \Br_a(Y)\times Br_a(C) @>>> \Br_a(S)\\
    @VV V @VVV @VV V @VVV\\
   \Br_a(S\times S) @>m^*>> \Br_a(S \times S\times S) @>>> \Br_a(S)\times \Br_a(S)\times \Br_a(S)@>>> \Br_a(S),
  \end{CD}
\end{equation*}
Let $\tilde {\mathcal B}$ be the image of $\mathcal B$ in $\Br_a(S)$ by the map (\ref{eq:eta2}). By Lemma \ref{lem:6}, we know $\tilde {\mathcal B}=\varphi(\mathcal B)$.
By the above commutative diagram and the part ii) in lemma \ref{lem:5}, we have
$$\varphi(\mathcal B,\mathcal C)=\tilde {\mathcal B}+\mathcal C.$$
Then we have $\psi(\mathcal B)=(\mathcal B, -\tilde {\mathcal B})$ by the exact sequence (\ref{exa:br}).

For any $(q_v,c_v)_v\in \T(\A_k)\times C(\A_k)$, let $(q_v')_v$ be its image in $\T^\alpha(\A_k)$ and the image of $(q_v')_v$ in $X(\A_k)$ is $(p_v)_v$. For any $\mathcal B\in \varphi^{-1}( \Sh^2(\widehat S))\subset\Br(\T^\alpha)$, its image is $(\mathcal B,-\tilde {\mathcal B})$ in $\Br_a(Y)\times \Sh^2(\widehat S)$. Then we have
$$\sum_v\mathcal B(q_v')=\sum_v(\mathcal B(q_v)-\tilde {\mathcal B}(c_v)).$$
By \cite[Lemma 8.4]{San81}, we have $$\sum_v\tilde {\mathcal B}(c_v)=-<-\alpha,\mathcal B>_{PT}=<\alpha,\mathcal B>_{PT}=\sum_v \mathcal B(q_v),$$
the last equation is from the choice of $\alpha$. Therefore we have $\sum_v\mathcal B(q_v')=0$.

Now we replace $\T$ by $\T^\alpha$, then for any $\mathcal B\in \varphi^{-1}(\Sh^2(\widehat S))$, we have
$\sum_v\mathcal B(q_v)=0$ for any $q_v\in \T(\A_k)$ and the image of $(q_v)_v$ is $(p_v)_v$.

Let $\tilde {\mathcal B}\in \Br_a(S)$ and $\mathcal B \in \Br_a(\T)$ with $\varphi(\mathcal B)=\tilde {\mathcal B}$.
The point $(q_v)_v$ give a morphism $\rho: Ima(\varphi)\to \Q/\Z$ by $\tilde {\mathcal B} \mapsto \sum_v\mathcal B(q_v)$. By the above argument, we know the definition of $\rho$ is well defined and $\rho |_{Ima(\varphi)\cap \Sh^2(\widehat S)}=0$.
We can choose $\tilde \rho: \Br_a(S)\rightarrow \Q/\Z$ such that $\tilde {\rho}|_{Ima(\varphi)}=\rho$ and $\tilde {\rho} |_{\Sh^2(\widehat S)}=0$.

By \cite[Theorem 2]{Har08}, we have the exact sequence
$$ S(\A_k)\rightarrow \Br_a(S)^D \rightarrow \Sh^2(\widehat S)^D \rightarrow 0.$$
Therefore, we can choose $(s_v)_v\in S(\A_k)$, such that $-\tilde {\rho}$ is the image of $(t_v)_v$ by the first map of the above exact sequence, $i.e.$, $$\sum_v\tilde {\mathcal B}(s_v)=-\sum_v \mathcal B(q_v)$$ for any $\mathcal B \in \Br(\T)$. Then we replace $(q_v)_v$ by $(q_v.s_v)_v$, its image is also $(p_v)_v$.
 For any $\mathcal B \in \Br(\T)$, the multiplication $m: S\times \T\rightarrow \T$ induced the map $m^*: \Br_a(\T)\rightarrow \Br_a(S)\times \Br_a(\T)$ by $\mathcal B \mapsto (\tilde {\mathcal B},\mathcal B)$ by the definition of $\varphi$, then we have
$$\sum_v\mathcal B(q_v.s_v)=\sum_v \mathcal B(q_v)+\sum_v \tilde {\mathcal B}(q_v)=0,$$
thus we complete the proof.
\end{proof}

The following result is immediately a consequence of Theorem \ref{thm:1}.
\begin{corollary}\label{cor:desc} Let $S_1\subset \Omega_k$ be a finite set.  If strong approximation with algebraic Brauer-Manin obstruction off $S_1$ holds on the torsors of $X$ with the extended type $\chi$, then it also holds on $X$.
\end{corollary}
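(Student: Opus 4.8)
The plan is to obtain the corollary as a formal consequence of Theorem~\ref{thm:1}: I would lift an adelic point of $X$ to a torsor, approximate there, and then push the resulting rational point forward along $f$.

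First I would unwind the definition of strong approximation off $S_1$: it asserts that $X(k)$ is dense in the image of $X(\A_k)^{\Br_1(X)}$ under the projection to the adeles away from $S_1$. So fix $(p_v)_v \in X(\A_k)^{\Br_1(X)}$, a finite set $T$ of places disjoint from $S_1$, and for each $v \in T$ an open neighbourhood $U_v \ni p_v$ in $X(k_v)$; the task is to produce $x \in X(k)$ with $x \in U_v$ for all $v \in T$. By Theorem~\ref{thm:1} there exist a torsor $f \colon \mathcal T \xrightarrow{\chi} X$ of extended type $\chi$ and a point $(q_v)_v \in \mathcal T(\A_k)^{\Br_1(\mathcal T)}$ with $f(q_v) = p_v$ for every $v$.

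Next, because $f$ is a morphism of $k$-varieties, the induced maps $f \colon \mathcal T(k_v) \to X(k_v)$ are continuous for the $v$-adic topology; hence for each $v \in T$ I can pick an open neighbourhood $W_v \ni q_v$ in $\mathcal T(k_v)$ with $f(W_v) \subset U_v$. Now the hypothesis applies to this particular torsor $\mathcal T$: strong approximation with algebraic Brauer--Manin obstruction off $S_1$ holds on $\mathcal T$, so $\mathcal T(k)$ is dense in the projection of $\mathcal T(\A_k)^{\Br_1(\mathcal T)}$ away from $S_1$, and one finds $t \in \mathcal T(k)$ lying in $W_v$ for every $v \in T$. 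Setting $x := f(t) \in X(k)$ then gives $x \in f(W_v) \subset U_v$ for all $v \in T$, which is exactly the desired approximation. This proves that $X(k)$ is dense in the projection off $S_1$ of $X(\A_k)^{\Br_1(X)}$, that is, strong approximation with algebraic Brauer--Manin obstruction off $S_1$ holds on $X$.

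I do not expect a genuine obstacle here, since the content is entirely carried by Theorem~\ref{thm:1}. The one point to watch is that the torsor $\mathcal T$ delivered by Theorem~\ref{thm:1} depends on the chosen adelic point $(p_v)_v$, so different adelic points may be lifted to different torsors. This is harmless precisely because the hypothesis is assumed for all torsors of extended type $\chi$ simultaneously, while for each fixed adelic point only a single such torsor is needed.
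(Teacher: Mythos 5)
Your proposal is correct and matches the paper's intent exactly: the paper simply declares the corollary an immediate consequence of Theorem~\ref{thm:1}, and your unwinding (lift the adelic point to a torsor point orthogonal to $\Br_1(\mathcal T)$ via the theorem, approximate there using the hypothesis, push the rational point forward by continuity of $f$) is precisely the standard argument being invoked. Your closing remark about the torsor depending on the adelic point, and why the hypothesis quantified over all torsors of extended type $\chi$ makes this harmless, is the one subtlety worth noting and you handle it correctly.
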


\begin{example} Let $K$ be finite etale $k$-algebra of dimension $n$ and $\omega_1,\cdots, \omega_n$ its $k$-basis. Let $m\geq n-1$ and let $Z$ be the closed subset of the projective space $\Bbb P^m$ defined by $N_{K/k}(x_1w_1+\cdots+x_nw_n)=0$. Then $\Bbb P^m\setminus Z$ satisfies strong approximation with algebraic Brauer-Manin obstruction off $\infty_k$.
\end{example}
\begin{proof} Use the embedding $\Bbb P^m\setminus Z\subset \Bbb P^m$, there is a morphism $$\chi: \Pic(\Bbb P^m_\kbar)\cong KD'(\Bbb P^m)\rightarrow KD'(X).$$
 Any torsor of $\Bbb P^m\setminus Z$ with the extended type $\chi$ is the restriction of a universal torsor of $\Bbb P^m$ on $\Bbb P^m\setminus Z$. However, the universal torsor of $\overline {\Bbb P^m}$ is unique, it is just $\Bbb A^{m+1}$. Then torsors of $P^m\setminus Z$ with the extended type $\chi$ $\T$ is just $R_{K/k}(\G_{m,K})\times \Bbb A^{m-n}$, obviously it satisfies strong approximation with algebraic Brauer-Manin obstruction  off $\infty_k$ (see \cite{Har08}). By Corollary \ref{cor:desc}, $\Bbb P^m\setminus Z$ satisfies strong approximation with algebraic Brauer-Manin obstruction off $\infty_k$.
\end{proof}

\section{Applications}

In this section, we give some application of Theorem \ref{thm:1}. The following theorem had already been proved in \cite{CX13}.
\begin{theorem} \label{thm:toric} Let $X$ be a smooth toric variety over $k$.
Then $X$ satisfies strong approximation with algebraic Brauer--Manin obstruction off $\infty_k$.
\end{theorem}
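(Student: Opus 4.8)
The plan is to reduce the statement to the torsors via Corollary~\ref{cor:desc} and then to identify those torsors explicitly, exactly as in the Example above, where the relevant torsor of $\mathbb P^m\setminus Z$ turned out to be the quasi-trivial torus $R_{K/k}(\G_m)$ times an affine space. Concretely, it suffices to produce an extended type $\chi$ for which the torsors of $X$ of type $\chi$ satisfy strong approximation with algebraic Brauer--Manin obstruction off $\infty_k$; Corollary~\ref{cor:desc} then transfers this to $X$.

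I would first assume that $X$ has no torus factor, i.e.\ that the rays of its fan span; otherwise $X\cong X_1\times T''$ over $k$ with $X_1$ of this kind and $T''$ a torus, a splitting I will treat at the end. Under this assumption I would choose a smooth proper toric compactification $X\hookrightarrow\Xbar$ over $k$, obtained from a Galois--equivariant completion of the fan followed by an equivariant resolution. Since $\Xbar$ is proper one has $\kbar[\Xbar]^\times=\kbar^\times$, so $KD'(\Xbar)\simeq \Pic(\Xbar_{\kbar})[0]$, and the open immersion induces a restriction map $\chi:\Pic(\Xbar_\kbar)\cong KD'(\Xbar)\to KD'(X)$. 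As in the Example, a torsor $\T$ of $X$ of extended type $\chi$ is the restriction to $X$ of a universal torsor of $\Xbar$.

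Next I would describe $\T$ through the Cox construction. Writing $N$ for the number of rays of $\Xbar$, permuted by $\Gal(\kbar/k)$ so that the ambient $k$-variety is a form of affine space and hence again an affine space $\mathbb A^N_k$, the universal torsor of the smooth proper toric variety $\Xbar$ is $\Ubar=\mathbb A^N_k\setminus\bar Z$, where $\bar Z$ is the irrelevant locus and $\operatorname{codim}_{\mathbb A^N}\bar Z\ge 2$. Its restriction $\T$ is obtained from $\Ubar$ by deleting the preimage of the boundary $\Xbar\setminus X$; the divisorial part of this preimage consists of coordinate loci $\{x_\rho=0\}$ attached to the added rays and contributes a quasi-trivial torus factor $\prod_i R_{K_i/k}(\G_m)$, while the remaining part again has codimension $\ge 2$. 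Thus $\T$ is, up to such a product, an open subvariety of affine space whose complement has codimension $\ge 2$.

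It then remains to verify strong approximation on $\T$. An open subvariety $U$ of $\mathbb A^N_k$ with $\operatorname{codim}(\mathbb A^N\setminus U)\ge 2$ satisfies $\kbar[U]^\times=\kbar^\times$ and $\Pic(\Ubar)=0$, hence $\Br_a(U)=0$, and it satisfies plain strong approximation off $\infty_k$ because $\mathbb A^N_k$ does and a closed subset of codimension $\ge 2$ can be avoided place by place. A quasi-trivial torus satisfies strong approximation with algebraic Brauer--Manin obstruction off $\infty_k$ by \cite[Theorem 2]{Har08}. Since both factors are geometrically rational, Lemma~\ref{lem:4} gives $\Br_a(\T)\cong\Br_a(U)\times\Br_a(\prod_iR_{K_i/k}\G_m)$, and a routine product argument combines the two approximation statements into strong approximation with algebraic Brauer--Manin obstruction off $\infty_k$ for $\T$. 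Corollary~\ref{cor:desc} yields the theorem for such $X$; finally, for a general $X\cong X_1\times T''$ one applies this to $X_1$, uses \cite[Theorem 2]{Har08} for the torus $T''$, and concludes by the same product argument. The main obstacle is exactly this analytic input: the codimension--two transfer of strong approximation together with its compatibility, via Lemma~\ref{lem:4}, with the Brauer--Manin obstruction on products.
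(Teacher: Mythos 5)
Your strategy is essentially the paper's: descend via Theorem~\ref{thm:1} (Corollary~\ref{cor:desc}) to torsors of the extended type induced by a smooth proper toric compactification $X\hookrightarrow X^c$, identify those torsors through the Cox construction as (affine space minus a codimension~$\ge 2$ closed set) times a quasi-trivial torus, and conclude from \cite[Lemma 1.1]{Wei14} for the affine factor and \cite[Theorem 2]{Har08} for the torus factor, the two being glued by Lemma~\ref{lem:4} since the affine factor has trivial $\Br_a$. Two points need attention. The opening reduction is false as stated: when the rays of the fan do not span one gets a surjection of tori $T\to T''$ and a fibration $X\to T''$, but this surjection need not split over $k$ (e.g.\ the norm $R_{K/k}\Gm\to\Gmk$), so $X\cong X_1\times T''$ over $k$ is not available. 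Fortunately the reduction is also unnecessary: a Galois-equivariant completion and resolution of the fan produces $X^c$ in all cases, and nothing later uses the no-torus-factor hypothesis, so you should simply delete this step.

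The second point is the codimension-two transfer, which you correctly identify as the crux but dispatch in one sentence. Your assertion that $\T$ is a product $U\times\prod_iR_{K_i/k}(\Gm)$ with $U$ open in affine space of codimension~$\ge 2$ complement is true, but it requires checking that the locus removed from $\mathbb{A}^{N}$ (the irrelevant locus together with the preimages of the cones of $\Delta^c\setminus\Delta$ all of whose rays lie in $\Delta(1)$) involves only the coordinates indexed by $\Delta(1)$ once the boundary coordinates are inverted; you do not verify this. The paper sidesteps the issue: it works with the smaller open toric subvariety $Y_{C'}=(Y\setminus Z)\times M_2\subset\T$ given by the subfan of rays, which is manifestly such a product, and then moves an adelic point of $\T(\A_k)^{\Br_1(\T)}$ into the open orbit $M(\A_k)$ using the fibration $g:\T\to T_1$ onto the torus dual to $\kbar[\T]^\times/\kbar^\times$, exploiting $\Pic(\overline\T)=0$ to get $\Br_1(T_1)\cong\Br_1(\T)$ so that the perturbation preserves orthogonality to $\Br_1$. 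Either route works, but the "avoided place by place" step must be accompanied by the observation that $\Br_a$ is unchanged by deleting a closed subset of codimension~$\ge 2$ and that Brauer evaluations are locally constant, so that orthogonality survives the perturbation; as written this is a gloss over the only nontrivial analytic content of the proof.
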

 \begin{proof} We have an embedding $X\to X^c$, where $X^c$ is a smooth proper toric variety. Then there is a morphism $$\chi: \Pic(\overline {X^c})\cong KD'(X^c)\rightarrow KD'(X).$$
 Since any torsor of $X$ with the extended type $\chi$ is the restriction of a universal torsor of $\overline {X^c}$ on $X$, let $\T$ be a such torsor, it is a restriction of a universal torsor $\T'$ of $\overline {X^c}$.

 Let $T\subset X$ be the torus contained in $X$ as an open subset.  Let $\text{Div}_{\Tbar}(\overline{X^c})$ be the $\Tbar$-invariant Weil divisor of $\overline{X^c}$. By \cite[Theorem 4.1.3]{Cox}, we have the following exact sequence
$$0\to \That(\cong \kbar[T]^\times/\kbar^\times) \xrightarrow{\text{div}} \text{Div}_{\Tbar}(\overline{X^c}) \to \Pic(\overline{X^c}) \to 0.$$
Let $\T'$ be a universal torsor of $X^c$. Let $\Mhat :=\text{Div}_{\Tbar}(\overline{X^c})$ and $M$ the dual torus of $\Mhat$. Then $$\Mhat=\Mhat_1\times \Mhat_2,$$
where $\Mhat_1=\text{Div}_{\Tbar}(\overline{X}), \Mhat_2=\text{Div}_{\overline{X^c}\setminus \Xbar}(\overline{X^c})$. We now want to apply \cite[Theorem 2.3.1, Corollary 2.3.4]{CTS87} for the
local description of universal torsors over $X^c$.
The restriction $\T'_T$ of the universal torsor $\T$ to $T$ has the form $\T_T  = M \times_T T\cong M$.

%We can see $\Mhat_1$,$\Mhat_2$ are permutation $\Gamma_k$-modules, in particular, there are finite field extensions $K_1,\dots,K_n$ over $k$ such that  $\Mhat_1 \cong \prod_{i=1}^{n}\Z[K_i/k]$.
%Therefore
%\begin{equation*}%\label{}
%M_1\cong \prod_{i=1}^{n} \R_{K_i/k}(\G_{m,K_i}).
%\end{equation*}
%
%Obviously we have the natural embedding $ M_1\subset \prod_i \R_{K_i/k}(\A^1_{K_i})(\cong \A^{n'}_k \text{ for some } n')$. Denote $Y:=\prod_i \R_{K_i/k}(\A^1_{K_i})$.
%We claim:
%
%\emph{there is a closed subvariety $Z\subset Y$ of codimension at least $2$ and $Y\setminus Z \supset M$, such that $M\rightarrow X$ can be extended to $Y\setminus Z \times_k M_2\rightarrow X$.} \newline

%With similar arguments as in the 4-th paragraph in the proof of Proposition \ref{prop:SA-Y}, we only need to prove this claim over $\kbar$.

%Now we assume $k=\kbar$.
Let $\That$ be the group of characters of $T$. Let $N:=X_*(T)=\Hom(\That,\Z)$, $\Delta$ a fan in $N_\Bbb R:=N\otimes \Bbb R$ and $X=X_\Delta$ the toric variety associated to $\Delta$.
Denote $\Deltabar$ to be the fan of $\Delta$ omitting the $\Gamma_k$-action.
We shall call a 1-dimensional cone a ray. Let $\Deltabar(1)$ be the set of rays of $\Deltabar$. Let $D_\rho$ be the $\Tbar$-invariant Weil divisor of $X$ associated to $\rho\in \Deltabar(1)$. Then $\Mhat_1=\sum_{\rho\in \Deltabar(1)}\Z D_\rho$. Let $\tilde N:=X_*(M)=\Hom(\Mhat,\Z)$, ${\tilde D}_\rho\in \tilde N$ the dual of $D_\rho$. We can see $\{{\tilde D}_\rho: \rho\in \Deltabar(1)\}$ is a subbasis of $\tilde N$.
%The morphism $\text{div}: \That\to \Mhat $ gives a morphism $f: \tilde N\to N$.

Let $C$ be the cone in $\tilde N$ generated by $\{{\tilde D}_\rho:\rho\in \Deltabar(1)\}$. Let $ Y_C$ be the toric variety of $C\subset \tilde N$ with the natural $M$-action. The permutation module $\Mhat_1$ is a factor of $\Mhat$, then we can see $Y_C=Y\times_k M_2$, where $Y$ is an affine space.

%If we find a subfan $C'$ of $C$ which contains all rays of $C$, and there is a toric morphism $Y_{C'}\to X_\Deltabar$ (compatible with $\Mbar\to\Tbar$), then we extend $\Mbar\to \Xbar$ to $Y_{C'}\to \Xbar$.

Let $R_\rho\subset \tilde N$ be the ray generated by ${\tilde D}_\rho$. Then $\{R_\rho: \rho\in \Deltabar(1)\}$  are all rays of $C$. We choose $$C':=\{0\}\cup \bigcup_{\rho\in \Deltabar(1)}\{R_\rho\}\subset C,$$
$C'$ is a subfan of $C$. Let $Y_{C'}$ be the toric variety associated to $C'$.
The variety $Y_{C'}=Y\setminus Z\times_k M_2$ and $Z$ has codimension $2$.
Let $f: \tilde N\to N$ be the morphism induced by $\text{div}: \That\to \Mhat $.
We claim that there is a toric morphism $Y_{C'} \to X_{\Deltabar}$, $i.e.$, there is a morphism of fans $C'\to \Deltabar$ which is induced by $f:\tilde N\to N$.  %In the following we gives the precise description of $f$.

For any $\rho\in \Deltabar(1)$, let $u_\rho\in N$ be the minimal generator of $\rho$. By \cite[Proposition 4.1.2]{Cox}, we have $$\text{div}: \That \to \Mhat, \chi \mapsto \text{div}(\chi)=\sum_{\rho\in \Deltabar}\chi(u_\rho)D_\rho.$$
For $\sigma\in \Deltabar(1)$, $f({\tilde D}_\sigma)\in N (=\Hom_\Z(\That, \Z))$ and
\begin{equation*}
f({\tilde D}_\sigma)=\left(\chi\mapsto {\tilde D}_\sigma(\sum_{\rho\in \Deltabar(1)}<u_\rho,\chi> D_\rho)=<u_\sigma,\chi>\right),
\end{equation*}
then $f({\tilde D}_\sigma)=u_\sigma$.
Therefore $f(R_\rho)=\rho$ for any $\rho\in \Deltabar(1)$, hence $f$ induces a morphism of fans $C' \to \overline {\Delta'}$.
%
%For $\sigma\in \Deltabar(1)$, $f({\tilde D}_\sigma)\in N (=\Hom(\That, \Z))$ and
%\begin{equation*}
%f({\tilde D}_\sigma)=\left(\chi\mapsto {\tilde D}_\sigma(\sum_{\rho\in \Deltabar(1)}\chi(u_\rho) D_\rho)=\chi(u_\sigma)\right).
%\end{equation*}
%Since the natural isomorphism $N\cong {N^\vee}^\vee$ is given by $x\mapsto (\chi \mapsto \chi(x))$, we have $f({\tilde D}_\sigma)=u_\sigma$.
%% $f: \tilde N\to N$ is given by $\tilde D_\rho \mapsto u_\rho$.
%Therefore $f(R_\rho)=D_\rho$ for any $\rho\in \Deltabar(1)$, hence $f$ induces a morphism of fans $C' \to \Deltabar$.

Recall that The variety $Y_{C'}=Y\setminus Z\times_k M_2$ and $Z$ has codimension $2$.
By Lemma \cite[Lemma 1.1]{Wei14}, $Y\setminus Z$ satisfies strong approximation off $\infty_k$, hence $Y_{C'}$ satisfies strong approximation with algebraic Brauer--Manin obstruction off $\infty_k$.

For any $(p_v) \in \T(\A_k)^{\Br_1(\T)}$, by Theorem 0.1, there is a torsor $f: \T \to X$ and $(r_v) \in \T(\A_k)^{\Br(\T)}$ with the extended type $\chi$ such that $(f(r_v)) = (p_v)$.
By \cite[Lemma 3.4]{CX15}, the torsor $\T$ is in fact of a $M$-toric variety. Let $T_1$ is the dual torus of $\kbar[\T]^\times/\kbar^\times$, in fact $\T_1\cong M_2$. Let $g:\T \to T_1$ be the natural morphism of $M$-varieties which extends the group morphism $M\to T_1$, then any geometrical fiber of $g$ is smooth and geometrical integral.
Therefore we can choose $(r_v') \in M(\A_k)$ such that $r_v'$ is very closed to $r_v$ in $\T(\A_k)$ and $g(r_v')=g(r_v)$. One has $\Br_1(T_1)\cong\Br_1(\T)$ since $\Pic(\overline\T)=0$, hence $g(r_v')=g(r_v)$ induces $\AA(r_v')=\AA(r_v)$ for any $\AA\in \Br_1(\T)$, $i.e.$, $(r_v')\in M(\A_k)^{\Br_1(\T)}$. Since $\Br_1(\T)\cong\Br_1(Y_{C'})$, one has $(r_v')\in M(\A_k)^{\Br_1(Y_{C'})}$. Since $Y_{C'}$ satisfies strong approximation with algebraic Brauer--Manin obstruction off $\infty_k$, we can choose a point $r\in Y_{C'}(k)$ which is very closed to $(r_v')_{v\nmid \infty}$ in $Y_{C'}(\A^\infty_k)$, thus its image in $X(k)$ is very closed to $(p_v)_{v\nmid \infty}$ in $X(\A^\infty_k)$.
\end{proof}

 For any connected linear algebraic group $G$, the reductive part $G^{red}$ of $G$ is given by $$ 1\rightarrow R_u(G)\rightarrow G \rightarrow G^{red} \rightarrow 1 $$ where $R_u(G)$ is the unipotent radical of $G$. Let $G^{ss}=[G^{red}, G^{red}]$ be the semi-simple part of $G$, let $G^{sc}$ be the semi-simple simply connected covering of $G^{ss}$.
\begin{theorem} \label{thm:general}  Let $k$ be a number field, $G$ a connected linear algebraic group and $H$ its connected subgroup. Let $X$ be a $G$-variety.
Suppose $X$ contains an open subset which is isomorphic to $G/H$ and the natural $G$-action on $G/H$ is compatible with the $G$-action on $X$.

Let $\Sigma$ be a finite set of places of $k$ containing all archimedean places. Assume $\prod_{v\in \Sigma}G'(k_v)$ is not compact for any non-trivial simple factor $G'$ of $G^{sc}$, and one of the following conditions holds:

1) $H$ is solvable;

2) $\Sigma$ contains at least one nonarchimedean place.\\
Then $X$ satisfies strong approximation with algebraic Brauer-Mannin obstruction off $\Sigma$.
\end{theorem}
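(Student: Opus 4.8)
The plan is to reduce strong approximation on $X$ to strong approximation on its torsors of a suitable extended type, using Corollary~\ref{cor:desc} (the descent corollary following Theorem~\ref{thm:1}), and then verify the hypothesis of that corollary by identifying the relevant torsors with varieties whose strong approximation is already known from the theory of homogeneous spaces. First I would choose a smooth compactification, or at least an open embedding $X \supset G/H$, and fix a morphism $\chi \colon \widehat{S} \to KD'(X)$ encoding an extended type so that the torsors $\T \xrightarrow{\chi} X$ are the objects to which descent applies. The point of passing to the new open descent of Theorem~\ref{thm:1} is precisely that $X$ may carry nonconstant invertible functions (as $G/H$ typically does), so the classical descent would force the torsors to inherit such functions and fail strong approximation; the extended-type formulation circumvents this.

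\emph{The key steps, in order.} The first step is to analyze the structure of a torsor $\T$ of extended type $\chi$ over the open orbit $G/H$. Since $G/H$ is a homogeneous space, a torsor under a torus $S$ of the prescribed type should again be a homogeneous space of an extended group $\tilde G$, with geometric stabilizer controlled by $H$ together with $\widehat{S}$; concretely one expects $\T|_{G/H}$ to be of the form $\tilde G / \tilde H$ where $\tilde H$ is a connected subgroup built from $H$. The second step is to choose the type so that $\T$ has trivial geometric Picard group and no nonconstant invertible geometric functions (the extended type is designed to absorb both $\Pic(\overline{G/H})$ and $\kbar[G/H]^\times/\kbar^\times$), which forces $\Br_1(\T) = \Br_a(\T) = 0$ modulo constants, so that strong approximation \emph{without} Brauer--Manin obstruction must hold on $\T$. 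The third step is to invoke the known strong approximation results for homogeneous spaces of connected linear algebraic groups: under the noncompactness hypothesis $\prod_{v\in\Sigma} G'(k_v)$ noncompact for each simple factor $G'$ of $G^{\sc}$, strong approximation off $\Sigma$ holds for $\tilde G/\tilde H$ provided either $\tilde H$ is connected and solvable or $\Sigma$ contains a finite place — these are exactly hypotheses (1) and (2). The fourth step is to handle the passage from the open orbit to all of $X$: a point of $X(\A_k)^{\Bra(X)}$ may lie in a lower-dimensional orbit, but after a small deformation at the finite places away from $\Sigma$ (using smoothness of $X$ and that $G/H$ is dense open) one may assume the adelic point lands in the open orbit, and then apply descent plus strong approximation on $\T$. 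Finally, Corollary~\ref{cor:desc} packages the conclusion: strong approximation with algebraic Brauer--Manin obstruction off $\Sigma$ descends from the torsors $\T$ to $X$.

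\emph{The main obstacle} I expect to be the first two structural steps: showing that a torsor of the chosen extended type over the homogeneous space $G/H$ is \emph{itself} a homogeneous space of a connected linear algebraic group, and that the type can be chosen so that $\tilde H$ remains connected and (under hypothesis~(1)) solvable, while simultaneously killing $\kbar[\T]^\times/\kbar^\times$ and $\Pic(\overline{\T})$. This is the analogue of the toric computation in Theorem~\ref{thm:toric}, where the torsor over the toric variety was shown to be again toric (via \cite[Lemma 3.4]{CX15}) after an explicit fan-theoretic analysis; here one needs the corresponding statement for general $G/H$, presumably by extending $G$ by the torus $S$ dual to the source of $\chi$ and checking that the stabilizer in the extended group is the expected connected subgroup. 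Controlling the two arithmetic invariants of $\tilde H$ — its connectedness and solvability — so that the input strong-approximation theorem for homogeneous spaces applies is where the real work lies; the deformation into the open orbit and the final descent are comparatively routine, relying on smoothness of $X$ and on Theorem~\ref{thm:1} together with \cite{Har08}.
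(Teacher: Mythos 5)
Your overall architecture (descent via Theorem~\ref{thm:1} to a torsor $\T$ of the extended type coming from a smooth compactification, identification of $\T\times_X(G/H)$ with a homogeneous space $\tilde G/\tilde H$ of an extension $1\to S\to\tilde G\to G\to 1$, with $\tilde H$ connected and solvable when $H$ is) matches the paper. But your second step contains a genuine gap that the rest of the proposal leans on. You claim the extended type can be chosen so that $\T$ has no nonconstant invertible geometric functions, hence $\Br_1(\T)=\Br_0(\T)$ and strong approximation holds on $\T$ \emph{without} any Brauer--Manin condition. This is impossible whenever $\kbar[X]^\times\neq\kbar^\times$ (e.g.\ already for $X=G/H$ non-proper): for any torsor $\T\to X$ under a torus $S$ one has the exact sequence
$$0 \to \kbar[X]^\times/\kbar^\times \to \kbar[\T]^\times/\kbar^\times \to \widehat S \to \Pic(\Xbar) \to \Pic(\overline{\T}) \to 0,$$
so $\T$ always inherits the nonconstant units of $X$. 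This is exactly the difficulty the paper singles out in its introduction as the reason the Harari--Skorobogatov descent is hard to apply, and it is why Theorem~\ref{thm:1} only reduces the problem to strong approximation \emph{with} algebraic Brauer--Manin obstruction on $\T$ (cf.\ Example 1.9, where the torsor is $R_{K/k}(\G_{m,K})\times\Bbb A^{m-n}$, which certainly fails unconditional strong approximation off $\infty_k$). The extended type kills $\Pic(\overline{\T})$, not the unit group.

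Consequently the real work of the paper's proof is absent from your plan: one must still prove strong approximation with algebraic Brauer--Manin obstruction for $\T$ itself. The paper does this by constructing an open $\tilde G$-stable subset $U\subset\T$ with $\mathrm{codim}(\T\setminus U,\T)\ge 2$ and a smooth surjective morphism $U\to V$ onto a toric variety $V$ built from the rays dual to the boundary divisors of $\overline{\T}\setminus\overline{Y}$ (extending the canonical map $\tilde G/\tilde H\to T_0$ to the torus dual to $\kbar[\tilde G/\tilde H]^\times/\kbar^\times$). The base $V$ satisfies strong approximation with algebraic Brauer--Manin obstruction off $\Sigma$ by the toric case (Theorem~\ref{thm:toric}), and the fibres are homogeneous spaces of $\tilde G_1=\ker(\tilde G\to T_0)$ with stabilizer $\tilde H$ and \emph{trivial} algebraic Brauer group (Lemma~\ref{lemma:2-1} plus $\Pic(\overline{\T})=0$), so Borovoi--Demarche applies to the fibres, not to $\tilde G/\tilde H$ as you propose; a fibration argument as in \cite[Proposition 3.1]{CTX13} then yields the result for $U$, and the codimension-$2$ condition together with $\Br_1(\T)\cong\Br_1(T_1)\cong\Br_1(U)$ transfers it to $\T$. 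Your ``deformation into the open orbit'' step is also not how the paper handles non-generic adelic points; that is subsumed in the codimension-$2$ and fibration arguments. Without the fibration over the toric variety your proof cannot close.
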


\begin{lemma}\label{lemma:2-1}
Let $G$ be a connected linear algebraic group and $H$ its connected subgroup. Suppose $\Pic(\overline G/\overline H)=0$. Let $T$ be the dual torus of $\kbar[G/H]^\times/\kbar^\times$. Let $G\rightarrow G/H \to T$
 be the natural morphism of  $G$-varieties and the composite map is a group morphism. Then any geometric fiber of $G/H \to T$ only has constant invertible regular functions and trivial Picard group. In particular, the algebraic Brauer group of the fiber over $T(k)$ is trivial.
\end{lemma}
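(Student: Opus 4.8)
The plan is to make the two vanishing statements transparent by identifying the geometric fibres explicitly and then showing that, over $\kbar$, the total space $\overline{G/H}$ splits as a direct product with $\Tbar$. First I would describe the fibres. Since the composite $G\to G/H\to T$ is a group homomorphism, $H$ is contained in $G_1:=\ker(G\to T)$, and the fibre of $G/H\to T$ over the identity is $G_1/H$; left translation by $G$ then shows that every geometric fibre is isomorphic to $\overline{G_1/H}$. I would also record at this point that $G_1$ is connected: the standard Rosenlicht--Sansuc exact sequence attached to the $H$-torsor $G\to G/H$ gives $\kbar[G/H]^\times/\kbar^\times=\ker(\Ghat\xrightarrow{\res}\Hhat)$, which by the very definition of $T$ is the inclusion $\That\hookrightarrow\Ghat$; hence the cokernel of $\That\to\Ghat$ is a subgroup of the free group $\Hhat$, so it is torsion-free, and therefore $\ker(G\to T)=G_1$ is connected.

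The key step is to split the extension $1\to\overline{G_1}\to\Gbar\to\Tbar\to1$ over $\kbar$, using the connectedness of $G_1$. I would choose a maximal torus $S$ of $\Gbar$; since $\overline{G_1}$ is a connected normal subgroup, $S_1:=S\cap\overline{G_1}$ is a maximal torus of $\overline{G_1}$, in particular a subtorus, and $S\to\Tbar$ is surjective with kernel $S_1$. Dualising yields a short exact sequence of character lattices $0\to\That\to\widehat{S}\to\widehat{S_1}\to0$ in which $\widehat{S_1}$ is free; hence the sequence splits, and a splitting dualises back to a homomorphic section $s\colon\Tbar\to S\subset\Gbar$ of $G\to T$. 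Writing the elements of $\Gbar=\overline{G_1}\rtimes s(\Tbar)$ in the form $s(t)\,n$ with $n\in\overline{G_1}$, one sees that right multiplication by $H\subseteq\overline{G_1}$ affects only the factor $n$; quotienting therefore produces an isomorphism $\overline{G/H}\cong\Tbar\times\overline{G_1/H}$ over $\Tbar$, under which each fibre is $\{t\}\times\overline{G_1/H}$.

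Granting this product decomposition, the three conclusions follow formally. For invertible functions, Rosenlicht's description of the units of a product gives $\kbar[G/H]^\times/\kbar^\times\cong\bigl(\kbar[G_1/H]^\times/\kbar^\times\bigr)\oplus\bigl(\kbar[\Tbar]^\times/\kbar^\times\bigr)$; since the left-hand side equals $\That=\kbar[\Tbar]^\times/\kbar^\times$ by the definition of $T$, the summand $\kbar[G_1/H]^\times/\kbar^\times$ is a torsion-free group of rank $0$, hence trivial. For the Picard group, restriction to the fibre $\{e\}\times\overline{G_1/H}$ is split surjective, the splitting being pullback along the projection $\Tbar\times\overline{G_1/H}\to\overline{G_1/H}$; thus $\Pic(\overline{G_1/H})$ is a quotient of $\Pic(\overline{G/H})=0$ and so vanishes. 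Finally, for a fibre $F_t$ over $t\in T(k)$ one has $\overline{F_t}\cong\overline{G_1/H}$, so both its geometric units modulo constants and its geometric Picard group are trivial; the complex $KD'(F_t)=[\kbar(F_t)^\times/\kbar^\times\to\Div(\overline{F_t})]$ is therefore acyclic, and $\Br_a(F_t)\cong H^1(k,KD'(F_t))=0$.

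The main obstacle is the splitting step, and with it the hypothesis that $H$, and hence $G_1$, is connected: it is precisely the connectedness of $\overline{G_1}$ that makes $S\cap\overline{G_1}$ a subtorus with free character lattice, which is what permits the lattice sequence to split. Without connectedness the extension can fail to split (as already for $\Gm\xrightarrow{2}\Gm$, whose ``fibre'' $\mu_2$ is a nontrivial form carrying extra arithmetic), and the clean product description---together with the consequent vanishing of the units, the Picard group, and the algebraic Brauer group of the fibre---would break down. A secondary point to check carefully is the left/right convention in the last part of the splitting step: the untwisted product is obtained by placing the section $s(\Tbar)$ on the left, so that the residual right $H$-action touches only the $\overline{G_1}$-factor.
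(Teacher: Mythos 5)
Your proof is correct, but it follows a genuinely different route from the paper's. The paper never splits anything: it gets the vanishing of $\kbar[G_1/H]^\times/\kbar^\times$ from Colliot-Th\'el\`ene's exact sequence for the fibration $G/H\to T$ with geometrically integral fibres (comparing the generic fibre over $\kbar(T)$ with the special fibres), and it gets $\Pic(\overline{G_1}/\Hbar)=\Pic(\Gbar/\Hbar)=0$ by chasing Sansuc's exact sequences (Prop.~6.10 and the proof of Cor.~6.11 of \cite{San81}) attached to $1\to G_1\to G\to T\to 1$ and to the $H$-torsors $G\to G/H$, $G_1\to G_1/H$. You instead produce a group-theoretic section of $\Gbar\to\Tbar$ over $\kbar$ and hence an honest product decomposition $\Gbar/\Hbar\cong\Tbar\times\overline{G_1}/\Hbar$, after which Rosenlicht's unit theorem and the splitting of restriction to a fibre finish the job. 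Your approach is more geometric and actually proves a stronger statement (the fibration is a trivial $\overline{G_1}/\Hbar$-bundle over $\kbar$), at the cost of structure theory of maximal tori; the paper's cohomological route avoids that and would survive even if the extension did not split. One point you should tighten: the splitting hinges on $\widehat{S}/\That$ being torsion-free, which you justify by asserting that $S\cap\overline{G_1}$ is a maximal torus (in particular connected) of the connected normal subgroup $\overline{G_1}$. It is cleaner, and closer to the facts you have already established, to note that $\widehat{S}/\That$ is an extension of $\widehat{S}/\Ghat$ (torsion-free because a maximal torus of a reductive group meets the derived group in a maximal torus) by $\Ghat/\That$ (torsion-free because it embeds into the free group $\Hhat$ --- exactly your connectedness argument for $G_1$); an extension of torsion-free by torsion-free is torsion-free, so the lattice sequence splits. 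With that patch your argument is complete, and your closing remark correctly identifies connectedness of $H$ as the load-bearing hypothesis in both proofs.
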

\begin{proof} Let $G_1$ be the kernel of $ G \rightarrow T$. Any geometric fiber of $f:G/H \to T$ is isomorphic to $G_1/H$ over $\kbar$. Then we only need to show $\kbar[G_1/H]^\times =\kbar^\times$ and $\Pic(\overline{G_1}/\overline{H})=0$.

Let $K:=\kbar(G)$. Let $\eta: Spec(K)\to T$ be the generic point of $T$, the generic fiber $f_\eta$ has a $K$-point and it is a homogeneous space of $G_1\times_k K$, its stabilizer is $H\times_k K$. Hence we have $f_\eta \cong_K G_1/H \times_k K$. By \cite[Proposition 3.1]{CT07}, one has $$K[f_\eta]^\times/K^\times \cong \kbar[G_1/H]^\times/\kbar^\times.$$
All fibers of $G/H \to T$ are geometrically integral, by \cite[Proposition 3.1 and 3.2]{CT07}, one has the exact sequence
$$0\to \widehat T \xrightarrow{\cong} \kbar[G/H]^\times/\kbar^\times \to \kbar[G_1/H]^\times/\kbar^\times \to \Pic(\overline T)=0,$$
hence $\kbar[G_1/H]^\times =\kbar^\times.$

By (\cite{San81}, proof of Corollary 6.11, p. 44), the exact sequence $$0\rightarrow G_1\rightarrow G\rightarrow T\rightarrow 0$$ gives the exact sequence
$$\widehat{G}\to\widehat{G_1}\to \Pic(\overline T)\rightarrow \Pic(\overline G)\rightarrow \Pic(\overline {G_1})\rightarrow 0,$$
hence $\Pic(\overline G) \cong \Pic(\overline {G_1})$ and $\widehat{G}\to\widehat{G_1}$ is surjective.

By \cite[Propsition 6.10]{San81}, we have the commutative diagram of exact sequences
 \begin{equation*}
\begin{CD}
    \widehat{G_1} @>>>  \widehat{H} @>>> \Pic(\overline{G_1}/\overline{H}) @>>> \Pic(\overline {G_1})@>>> \Pic(\overline H)\\
     @AAA @| @AAA @AA\cong A @|\\
  \widehat{G} @>>>  \widehat{H}@>>> \Pic(\overline{G}/\overline{H}) @>>>\Pic(\overline G) @>>> \Pic(\overline H),
  \end{CD}
\end{equation*}
hence we have $\Pic(\overline{G_1}/\overline{H})\cong\Pic(\overline G/\overline{H})=0$.
\end{proof}

\begin{proof}[Proof of Theorem 2.2]
%By Corollary \ref{cor:desc}, we only need to show all torsors of $X$ with the type $\chi$ satisfies strong approximation with respect to $\Br_1(X)$ off $\infty_k$.

Let $X^c$ be a smooth compactification of $X$, then $\Pic(\overline{X^c})$ is a free module. We have a morphism $$\Pic(\overline {X^c})\cong KD'(X^c)\rightarrow KD'(X).$$
Let $\chi$ be the above morphism $\Pic(\overline {X^c})\rightarrow KD'(X)$ and let $\T$ be a torsor $\T$ of $X$ with the extended type $\chi$.

The composite map $$G\rightarrow G/H \hookrightarrow X$$
gives a torsor $G\times_X \T$ of $G$ under $S$, where $S$ is the dual torus of $\Pic(\overline {X^c})$. Replace $\T$ by its twist under an element of $H^1(k,S)$, we can assume $G\times_X \T$ has a $k$-point.
By \cite[Theorem 5.6]{CT08}, $G\times_X \T$ is a connected linear algebraic group denoted by $\tilde{G}$. There is the exact sequence of algebraic groups
\begin{equation}\label{exa:g}
1\rightarrow S\rightarrow \tilde{G}\rightarrow G\rightarrow 1.
\end{equation}
%Then $\tilde{G}$ is a quasi-trivial linear algebraic group over $k$ (i.e. $Pic(H_{\bar{k}})=0$ and $\bar{k}[H]^{\times}/\bar{k}^{\times}$ is a permutation module).

By a similar argument in \cite[Lemma 3.4]{CX15}, the $G$-action of $X$ can be naturally uniquely extended to a $\tilde{G}$-action of $\T$. The variety $\T\times_X (G/H)$ is a homogeneous space of $\tilde{G}$ and has a $k$-point, its stabilizer is denoted by $\tilde H$. There is the exact sequence $$0\rightarrow S \rightarrow \tilde H\rightarrow H\rightarrow 0.$$ In particular, if $H$ is solvable,  then $\tilde H$ is solvable, hence $\Pic(\overline{\tilde H})=0$.

For any $(s_v) \in X(\A_k)^{\Br_1(X)}$, by Theorem \ref{thm:1}, there is $\alpha\in H^1(k,S)$ and $(s_v')\in \T^{\alpha}(\A_k)^{\Br_1(\T^{\alpha})}$ such that $f(s_v')=s_v$, where $f: \T^{\alpha}\to X$. Obviously $f^{-1}(G/H)$ is a homogenous space of $\tilde G$, then $\T^{\alpha}(\A_k)^{\Br_1(\T^{\alpha})}\neq \emptyset$ implies $\T^{\alpha}(k)\neq \emptyset$ by \cite[Theorem 2.2]{Bo96}.  In fact we also have $f^{-1}(G/H)(k) \neq \emptyset$, then $f^{-1}(G/H) \cong \tilde G/\tilde H$, hence $\T^{\alpha}$ is also a $\tilde G$-variety and $\tilde G/\tilde H\subset \T^{\alpha}$. Now we denote $\T^\alpha$ also by $\T$. The proof follows from the claim that $\T$ satisfies strong approximation with algebraic Brauer--Manin obstruction off $\Sigma$. In the following, we will proof the claim.

By the choice of $\chi$, the torsor $\T$ of $X$ is the restriction of a universal torsor $\T'$ of $X^c$ on $X$. By \cite[Proposition 2.1.1]{CTS87}, we have $\kbar[\T']^\times\cong \kbar^\times$ and $\Pic(\T')=0$, hence
\begin{equation}\label{Pic}
\Pic(\T)=\Pic(\overline {\tilde G}/\overline {\tilde H})=0.
\end{equation}

%Let $K$ be a finite \'etale algebra over $k$. The unique minimal toric subvariety $V$ of $(Res_{K/k}(\Bbb G_m)\hookrightarrow Res_{K/k}({\bf A}^1))$ over $k$ with respect to $Res_{K/k}(\Bbb G_m)$ such that $$codim( Res_{K/k}({\bf A}^1)\setminus V, Res_{K/k}({\bf A}^1))\geq 2$$ is called the standard toric variety of $K/k$.

%Let $Y=\tilde G/\tilde H$, which is an open subset of $\T$, therefor $\Pic(\Ybar)=0$. By \cite[Propsition 6.10]{San81}, we have
%$$ \Pic(\Ybar)\to \Pic(\overline {\tilde G}) \to \Pic(\overline{\tilde H})=0,$$ we have $\Pic(\overline {\tilde G})=0$.

The exact sequence (\ref{exa:g}) gives the exact sequence $$1\rightarrow S\rightarrow \tilde{G}^{red}\rightarrow G^{red}\rightarrow 1.$$
Let $M$ be the kernel $\tilde{G}^{tor}\rightarrow G^{tor}$, and let $\mu$ be the kernel of the induced map $S\to M$. There is  the commutative diagram of exact sequences
$$  \begin{CD} @. 1 @. 1 @. 1\\
 @. @VVV  @VVV @VVV\\
1 @>>> \mu @>>> \tilde{G}^{ss} @>>> G^{ss} @>>> 1 \\
@.    @VVV  @VVV  @VVV\\
1 @>>> S @>>> \tilde{G}^{red} @>>> G^{red} @>>> 1 \\
@. @VVV @VVV @VVV \\
1 @>>> M @>>> \tilde{G}^{tor} @>>> G^{tor} @>>> 1 \\
@. @VVV @VVV @VVV \\
@. 1 @. 1 @. 1
\end{CD} $$
The exact sequence (\ref{exa:g}) gives an exact sequence
$$0\to \widehat{G^{tor}}\to \widehat{\tilde G^{tor}}\to \widehat{S}\to \Pic(\overline G).$$
Since $\Pic(\overline G)$ is finite (see \cite[Proposition 4.14]{CT07}), the kernel of $S \to \tilde G^{tor}$ is finite, hence $\mu$ is finite.
Then the first row of the above commutative diagram induces
\begin{equation}\label{eq:sc}
\tilde{G}^{sc}=G^{sc}.
\end{equation}

Denote $Y:=\tilde G/\tilde H\subset \T$. Let $T_0$ and $T_1$ be respectively the dual tori of $\kbar[Y]^\times/\kbar^\times$ and $\kbar[\T]^\times/\kbar^\times$. There is an exact sequence
$$0 \to \widehat{T_1} \to \widehat{T_0}\to \Div_{\overline \T\setminus \overline Y}(\overline\T)\to 0.$$
Let $\tilde N:=X_*(T_0)=\Hom(\widehat{T_0},\Z)$. Let $D_1,\cdots, D_n$ be the prime divisors in $\overline\T\setminus \Ybar$.  Let ${\tilde D_i} \in \tilde N$ be the dual of $D_i$ and $\tilde R_i\subset \tilde N$ the ray generated by ${\tilde D_i}$. Then $C:=\{\tilde R_i:i=1,\cdots,n\}$ is a fan in $\tilde N$. Let $V$ be the toric variety associated to $C$. By Theorem \ref{thm:toric}, $V$ satisfies strong approximation with algebraic Brauer--Manin obstruction off $\Sigma$.

By a similar argument as in the proof of Proposition 3.12 in \cite{CX15}, there exists an open $\tilde G$-subvariety $U\subset \T$ such that $\tilde G/\tilde H \subset U$,
$\text{codim}(\T\setminus U,\T)\geq 2$ and the canonical map $\tilde G/\tilde H\rightarrow T_0$ can be extended a morphism $U\xrightarrow{\varsigma_U} V$ of $\tilde G$-varieties. Moreover, the morphism $\varsigma_{U}$  is smooth with nonempty and geometrically integral fibres.

%Applying the fibration $U\rightarrow V$, its generic fiber is just the generic fiber of $\tilde G/\tilde H \to T_0$, then we have $\kbar[V]^\times \cong \kbar[U]^\times$, then $\Br_1(V) \xrightarrow{\cong} \Br_1(U)$ since $\Pic(\Vbar)=\Pic(\Ubar)=0$.
%The fiber over $T_0(k)\subset V(k)$ is a homogeneous space of the algebraic group $\tilde G$ and any such fiber has trivial algebraic Brauer group by Lemma \ref{lemma:2-1}. $\Br_a(Y,G)=\Br_a(Y)=0$ , then these fibers satisfy strong approximation off $\infty_k$ by \cite[Theorem 0.1]{BD13}.
Now we use the fibration $U\rightarrow V$, note $V$ satisfies strong approximation with algebraic Brauer--Manin obstruction off $\Sigma$.
For any $(p_v) \in U(\A_k)^{\Br_1(U)}$, let $(r_v)\in V(\A_k)^{\Br_1(V)}$ be its image, by an easy fibration argument similar as in \cite[Proposition 3.1]{CTX13}, we can find a point $r\in T_0(k)$ same as in the proof of \cite[Proposition 3.1]{CTX13}. In particular the point $r$ is very closed to $(r_v)_{v\nmid \infty}\in V(\A^\Sigma)$ such that there is a point $(p_v') \in U_r(\A_k)$ such that $(p_v')_{v\not \in \Sigma}$ is very closed to $(p_v)_{v\not \in \Sigma}$ in $ U(\A^\Sigma)$.

We consider the composite morphism of $\tilde G \rightarrow\tilde G/\tilde H\rightarrow T_0$, its kernel is a connected linear algebraic group denoted by $\tilde G_1$. For any point in $T_0(k)$, its fiber of $\tilde G/\tilde H\rightarrow T_0$ is a homogeneous space of $\tilde G_1$ with the stabilizer $\overline{\tilde H}$, it 
(hence $U_r$) has trivial algebraic Brauer group by (\ref{Pic}) and Lemma \ref{lemma:2-1}. Therefore $U_r$ has a rational point by \cite[Theorem 2.2]{Bo96} and contains an open subset which is isomorphic to $\tilde G_1/\tilde H$.

Since $\tilde G_1$ is the kernel of $G$ to a torus, one has $\tilde G_1^{ss}=\tilde G^{ss}$, hence $\tilde G_1^{sc}=\tilde G^{sc}$. By (\ref{eq:sc}), one has $\tilde G_1^{sc}= G^{sc}$, thus $\prod_{v\in \Sigma}\tilde G_1'(k_v)$ is not compact for any non-trivial simple factor $\tilde G_1'$ of $\tilde G_1^{sc}$  since the same property holds on $G$.

If $H$ is solvable, then $\tilde H$ is also solvable, hence $\Pic(\overline{\tilde H})=0$. By \cite[Propsition 2.10]{San81}, we have the exact sequence $$\Pic(\overline{\tilde H})\to \Br(\overline {\tilde G_1}/\overline {\tilde H})\to \Br(\overline {\tilde G_1}).$$
Therefore the morphism $\Br(\overline {\tilde G_1}/\overline {\tilde H})\to \Br(\overline {\tilde G_1})$ is injective. Then we have
\begin{equation}\label{br}
\Br_a(\tilde G_1/\tilde H,\tilde G_1)=\Br_a(\tilde G_1/\tilde H)=0
\end{equation}
 by (\ref{Pic}) and Lemma \ref{lemma:2-1}, where $\Br_a(\tilde G_1/\tilde H,\tilde G_1)=\ker[\Br(\tilde G_1/\tilde H)\to \Br(\overline {\tilde G_1})]$.

By (\ref{br}) and \cite[Theorem 0.1]{BD13} for the case $H$ is solvable and  Lemma \ref{lemma:2-1} and \cite[Theorem 0.2]{BD13} for the other case, $\tilde G_1/\tilde H$ (hence $U_r$) satisfies strong approximation off $\Sigma$. Thus we can find a point $p\in U_r(k)$ which is very closed to $(p_v')_{v\not \in \Sigma}$ in $ U_r(\A^\Sigma)$, then $p$ is also very closed to $(p_v)_{v\not \in \Sigma}$ in $U(\A^\Sigma)$.
So $U$ satisfies strong approximation with algebraic Brauer--Manin obstruction off $\Sigma$.

Recall that $T_1$ is the dual torus of $\kbar[\T]^\times/\kbar^\times$. Let $g:\T \to T_1$ be the natural morphism of $\tilde G$-varieties, then all geometrical fibers of $g$ are isomorphic since $T_1$ just has one $\tilde G$-orbit, hence they are smooth and geometrical integral.
For any $(q_v) \in \T(\A_k)^{\Br_1(\T)}$ in $\T(\A_k)$, then we can choose $(q_v') \in U(\A_k)$ such that $(q_v') $  is very closed to $(q_v) $ in $\T(\A_k)$ and $g(q_v')=g(q_v)$. Note that $g(q_v')=g(q_v)$ induces $\AA(q_v')=\AA(q_v)$ for any $\AA\in Ima[\Br_1(T_1) \to \Br_1(\T)]$. Since $\text{codim}(\T\setminus U,\T)\geq 2$, one has
$$\Br_1(T_1) \xrightarrow{\cong}\Br_1(\T) \xrightarrow{\cong}\Br_1(U).$$
Therefore, we have $(q_v')\in U(\A_k)^{\Br_1(U)}$. Since $U$ satisfies strong approximation with algebraic Brauer--Manin obstruction off $\Sigma$, we can choose a point $q\in U(k)$ which is very closed to $(q_v')_{v\not \in \Sigma}$ in $ U(\A^\Sigma)$, hence $q$ is also very closed to $(q_v)_{v\not \in \Sigma}$ in $\T(\A^\Sigma)$.
Then $\T$ satisfies strong approximation with algebraic Brauer--Manin obstruction off $\Sigma$.
\end{proof}

%Let $S_0$ be a finite set of places of $k$ containing all archimedean places and at least one nonarchimedean
%place. Then By
%\begin{theorem} \label{thm:general-2}  Let $k$ be a number field, $G$ a connected linear algebraic group and $H$ its connected solvable subgroup. Let $X$ be a $G$-variety, and let $G/H$ be an open subset of $X$ and the $G$-action on $G/H$ is compatible with the $G$-action on $X$.
%
%Assume $G'(k_\infty)$ is not compact for any non-trivial simple factor $G'$ of $G^{sc}$.
%Then any smooth $G$-groupic variety $X$ over $k$ satisfies strong approximation with respect to $\Br_1(X)$ off $\infty_k$.
%\end{theorem}

\bf{Acknowledgment} 	
 \it{The work is supported by National
  Key Basic Research Program of China (Grant No. 2013CB834202) and
  National Natural Science Foundation of China (Grant Nos. 11371210 and
  11321101). }

%%%%%%%%%%%%%%%%%%%%%%%%%%%%%%%%%%%%%%%%%%%%%%%%%%%%%%%%%%%%%%%%%%%%%%%%%%%%%%%%%%%%%

%%%%%%%%%%%%%%%%%%%%%%%%%%%%%%%%%%%%%%%%%%%%%%%%%%%%%%%%%%%%%%%

\bigskip
{\small

%{\scshape
%C. Demarche: Institut de Math\'ematiques de Jussieu (IMJ), Universit\'e Pierre et Marie Curie, 4 place Jussieu,
%75252 Paris Cedex 05, France}
%\smallskip
%
%{\it E-mail: }
%\url{demarche@math.jussieu.fr}
%
%\bigskip

{\scshape
D. Wei: Academy of Mathematics and System Science,  CAS, Beijing
100190, P.R.China}
\smallskip

{\it E-mail: }
\url{dshwei@amss.ac.cn}
}

\end{document}